\numberwithin{equation}{section}
\theoremstyle{plain}
	\newtheorem{theorem}{Theorem}[section]
	\newtheorem{lemma}[theorem]{Lemma}
	\newtheorem{proposition}[theorem]{Proposition}
	\newtheorem{corollary}[theorem]{Corollary}
\theoremstyle{definition}
	\newtheorem{definition}[theorem]{Definition}
	\newtheorem{remark}[theorem]{Remark}
\newcommand{\N}{\mathbb{N}}
\newcommand{\R}{\mathbb{R}}
\newcommand{\Haus}{\mathcal{H}}
\newcommand{\di}{\,\mathrm{d}}
\DeclarePairedDelimiter{\set}{\{}{\}}
\mathchardef\ordinarycolon\mathcode`\:
\begin{document}

\title[On the monotonicity of non-local perimeter of convex bodies]{On the monotonicity of non-local perimeter \\ of convex bodies}

\author[F.~Giannetti]{Flavia Giannetti}
\address[F.~Giannetti]{Dipartimento di Matematica ed Applicazioni ``R. Caccioppoli'',
Università degli Studi di Napoli ``Federico II'', Via Cintia, 80126 Napoli, Italy}
\email{giannett@unina.it}

\author[G.~Stefani]{Giorgio Stefani}
\address[G.~Stefani]{Scuola Internazionale Superiore di Studi Avanzati (SISSA), via Bonomea 265, 34136 Trieste, Italy}
\email{gstefani@sissa.it {\normalfont or} giorgio.stefani.math@gmail.com}

\date{\today}

\keywords{Convex body, non-local perimeter, monotonicity, Hausdorff distance, Schwartz symmetrization.}

\subjclass[2020]{Primary 52A20. Secondary 52A40.}

\thanks{
\textit{Acknowledgements}. 
The authors are members of INdAM--GNAMPA.
The first-named author is a member of UMI--TAA group and has received funding from INdAM under the INdAM--GNAMPA 2023 Project \textit{Su alcuni problemi di regolarità del Calcolo delle Variazioni con convessità degenere} (grant agreement No.\ CUP\_E53\-C22\-001\-930\-001) and from the FRA Project 2020 \textit{Regolarità per minimi di funzionali ampiamente degeneri} (Project code 000022).
The second-named author has received funding from INdAM under the INdAM--GNAMPA 2023 Project \textit{Problemi variazionali per funzionali e operatori non-locali} (grant agreement No.\ CUP\_E53\-C22\-001\-930\-001), and has received funding from the European Research Council (ERC) under the European Union’s Horizon 2020 research and innovation program (grant agreement No.~945655).
The present research started during a visit of the second-named author at the Dipartimento di Matematica e Applicazioni `Renato Caccioppoli' of the Università degli Studi di Napoli `Federico II'. The second-named author wishes to thank the department for its kind hospitality.
} 

\begin{abstract}
Under mild assumptions on  the kernel $K\ge0$, the non-local $K$-perimeter $P_K$ satisfies the monotonicity property on nested convex bodies, i.e., if $A\subset B\subset\R^n$ are two convex bodies, then $P_K(A)\le P_K(B)$.
In this note, we prove  quantitative lower bounds on  the difference of the $K$-perimeters of $A$ and $B$ in terms of their Hausdorff distance, provided that $K$ satisfies suitable symmetry properties.
\end{abstract}

\maketitle

\section{Introduction}

\subsection{Monotonicity property}

Let $n\ge 2$.
If $A\subset B\subset\R^n$ are two nested convex bodies, that is, compact convex sets with non-empty interior, then
\begin{equation}
\label{eq:archimede}
P(A)\le P(B).
\end{equation}
Here $P(E)=\mathscr H^{n-1}(\partial E)$ is the Euclidean perimeter of the convex body $E\subset\R^n$ and $\mathscr H^{d}$ is the $d$-dimensional Hausdorff measure, $d\in[0,n]$.

The monotonicity property~\eqref{eq:archimede} is well known and dates back to the ancient Greeks (Archimedes took it as a postulate in his work on the sphere and the cylinder~\cite{A04}*{p.~36}). 
Inequality~\eqref{eq:archimede} follows from the \emph{Cauchy formula} for the area surface of convex bodies~\cite{BF87}*{\S7},  the monotonicity property of \emph{mixed volumes}~\cite{BF87}*{\S8}, the Lipschitz property of the projection on a convex closed set~\cite{BFK95}*{Lem.~2.4} and, finally, from the fact that the perimeter is decreased by intersection with half-spaces~\cite{M12}*{Ex.~15.13}. 

The monotonicity property~\eqref{eq:archimede} also holds for the \emph{anisotropic} (\emph{Wulff}) \emph{perimeter}
\begin{equation}
\label{eq:wulff}
P_\Phi(E)=\int_{\partial E}\Phi(\nu_E(x))\di\mathscr H^{n-1}(x)
\end{equation}
of a convex body $E\subset\R^n$, where $\nu_E\colon\partial E\to\mathbb S^{n-1}$, $\mathbb S^{n-1}=\set*{x\in\R^n : |x|=1}$, is the inner unitary normal of~$E$ (defined for $\mathscr H^{n-1}$-a.e.\ $x\in\partial E$) and $\Phi\colon\R^n\to[0,+\infty]$ is positively $1$-homogeneous and convex (see~\cite{M12}*{Ch.~20}).
Similarly to~\eqref{eq:archimede}, the monotonicity property of~\eqref{eq:wulff} is a consequence of the anisotropic \emph{Cauchy formula}, of the monotonicity property of \emph{mixed volumes}~\cite{BF87}*{\S7,\S8}, and of the fact that~\eqref{eq:wulff} is decreased by intersection with half-spaces~\cite{M12}*{Rem.~20.3}.

The monotonicity property of perimeters has gained increasing attention in recent years, see~\cites{H23,M22,DLW22,B21,DV20,SS23} for some applications and  related results.
A current active line of research concerns quantitative formulations of the monotonicity property.
Lower bounds on the deficit $\delta(B,A)=P(B)-P(A)$ in terms of the \emph{Hausdorff distance} $h(A,B)$ between~$A$ and~$B$ (see~\cite{S14}*{Sec.~1.8} for the definition) have been obtained for $n=2,3$ in~\cites{LL08,CGLP15,CGLP16} (also see the survey~\cite{G17}) and for any $n\ge2$ in~\cite{S18}. 
Actually, the main result of~\cite{S18} establishes a quantitative lower bound also for~\eqref{eq:wulff}, provided that~$\Phi$ possess suitable symmetry properties.
We stress that the inequalities proved in~\cites{LL08,CGLP15,CGLP16,S18} are sharp, in the sense that they are equalities at least in one non-trivial case.
Quantitative monotonicity inequalities for the perimeter can be applied to achieve lower bounds on the minimal number of convex components of a non-convex set~\cites{LL08,CGLP19,GS23}.

\subsection{Main results}

The aim of the present note is to study the monotonicity property of non-local perimeter functionals, also in its quantitative form.

Given a non-negative measurable \emph{kernel} $K\colon\R^n\to[0,+\infty]$, the associated \emph{non-local $K$-perimeter} of a measurable set $E\subset\R^n$ is defined as
\begin{equation}
\label{eq:K-perimeter}
P_K(E)=\frac12\int_{\R^n}\int_{\R^n}|\chi_E(x)-\chi_E(y)|\,K(x-y)\di x\di y.
\end{equation}
A prominent example of such non-local functional is the \emph{$s$-fractional perimeter}, $s\in(0,1)$,
\begin{equation}
\label{eq:s-perimeter}
P_s(E)=\int_{E}\int_{E^c}\frac{\di x\di y}{|x-y|^{n+s}},
\end{equation} 
induced by the \emph{fractional kernel} $K_s=|\cdot|^{-n-s}$, see~\cites{DPV12}.
The $K$-perimeter~\eqref{eq:K-perimeter} has attracted considerable attention in recent years,
see~\cites{C20,CSV19,P20,CN18,L14,K21,DNP21,CN22,CMP15,BP19,FPSS22,BS22}.

Due to the definition in~\eqref{eq:K-perimeter}, it is not restrictive to assume that
\begin{equation}
\label{ass:sym}
K(-x)=K(x)
\quad 
\text{for a.e.}\ x\in\R^n.
\end{equation}
Moreover, since we need the $K$-perimeter to be finite on convex bodies, we assume that 
\begin{equation}
\label{ass:nts}
\int_{\R^n}\min\set*{1,|x|}\,K(x)\di x<+\infty.
\end{equation}
Actually, condition~\eqref{ass:nts} yields that $P_K(E)<+\infty$ whenever $P(E)<+\infty$ and $|E|<+\infty$, see \cref{res:interpolation} below (where $|E|=\mathscr H^n(E)$ denotes the Lebesgue measure of $E\subset\R^n$).
We tacitly assume~\eqref{ass:sym} and~\eqref{ass:nts} throughout the paper. 

The monotonicity property of non-local perimeters was first proved for~\eqref{eq:s-perimeter} in~\cite{FFMMM15}*{Lem.~B.1} and then for~\eqref{eq:K-perimeter} in~\cite{BS22}*{Cor.~2.40} (we also refer to~\cite{CN22}*{Rem.~2.4} for the monotonicity of the localized functional $P_K(\,\cdot\,;B_R)$ for $R>0$, see~\eqref{eq:K-perimeter_localized} below for the precise definition).
Actually, the monotonicity results proved in~\cites{FFMMM15,CN22,BS22} require further assumptions on~$K$ in addition to~\eqref{ass:sym} and~\eqref{ass:nts}.
This is due to the fact that~\cites{FFMMM15,CN22,BS22} employ the monotonicity property of $P_K$ for convex sets which may be unbounded.

Our first main result is the following theorem, showing that assumptions~\eqref{ass:sym} and~\eqref{ass:nts} guarantee the monotonicity property of~$P_K$ on nested convex bodies. 

\begin{theorem}[$K$-monotonicity]
\label{res:monotonicity}
Let $n\ge2$. If $A\subset B\subset\R^n$ are two convex bodies, then 
\begin{equation}
\label{eq:monotonicity}
P_K(A)\le P_K(B).    
\end{equation}
\end{theorem}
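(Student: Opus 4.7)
The plan is to reduce \cref{res:monotonicity} to a pointwise geometric inequality for the Lebesgue measure of the symmetric differences $E\triangle(E-v)$ between a convex body and its translates. Using Fubini's theorem and the change of variable $v=y-x$, the $K$-perimeter of a measurable set $E$ can be rewritten in the covariogram form
\begin{equation*}
P_K(E) = \frac{1}{2}\int_{\R^n} K(v)\,|E\triangle(E-v)|\di v,
\end{equation*}
with finiteness of the right-hand side on convex bodies guaranteed by \cref{res:interpolation} together with~\eqref{ass:nts}. Hence it suffices to establish the pointwise geometric bound $|A\triangle(A-v)|\le |B\triangle(B-v)|$ for every $v\in\R^n$.

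To prove this bound I would slice $\R^n$ by the one-dimensional foliation of lines parallel to $\hat v=v/|v|$ and apply Fubini. Since $E$ is convex, for every $w\in\hat v^\perp$ the slice $E\cap(w+\R\hat v)$ is a (possibly empty) compact interval of length $\ell_E(w)\ge 0$; the elementary identity $|I\triangle(I-t)|=2\min\set{|t|,\ell}$, valid for any interval $I\subset\R$ of length $\ell$ and any $t\in\R$, then gives
\begin{equation*}
|E\triangle(E-v)| = 2\int_{\hat v^\perp}\min\set{|v|,\ell_E(w)}\di\mathscr H^{n-1}(w).
\end{equation*}
Convexity of $E$ enters precisely here, so that each slice contributes exactly $2\min\set{|v|,\ell_E(w)}$ and not a larger, more complicated quantity coming from a proper union of intervals.

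The conclusion is then almost automatic. For nested convex bodies $A\subset B$, the line-by-line inclusion $A\cap(w+\R\hat v)\subset B\cap(w+\R\hat v)$ of intervals forces $\ell_A(w)\le\ell_B(w)$ for every $w\in\hat v^\perp$, whence $\min\set{|v|,\ell_A(w)}\le\min\set{|v|,\ell_B(w)}$ and therefore $|A\triangle(A-v)|\le|B\triangle(B-v)|$ pointwise in $v$. Multiplying by $K(v)\ge 0$ and integrating over $\R^n$ gives~\eqref{eq:monotonicity}. I do not foresee any serious obstacle: the only delicate point is the rigorous justification of the covariogram formula and of the slicing identity, but both are standard applications of Fubini's theorem once the relevant integrals are seen to be finite via~\eqref{ass:nts} and \cref{res:interpolation}.
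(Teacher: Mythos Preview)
Your argument is correct and takes a genuinely different route from the paper. The paper derives \cref{res:monotonicity} from \cref{res:intersection}, which in turn is proved by approximating a convex set by finite intersections of half-spaces and invoking the local minimality of half-spaces for $P_K$ (\cref{res:local-minimality}, borrowed from~\cites{ADM11,P20}), together with lower semicontinuity. Your approach is more elementary and self-contained: the covariogram identity $P_K(E)=\tfrac12\int_{\R^n}K(v)\,|E\triangle(E-v)|\di v$ reduces the problem to the pointwise comparison $|A\triangle(A-v)|\le|B\triangle(B-v)|$, and slicing along~$v$ turns this into the trivial one-dimensional fact that nested intervals have nested symmetric differences with any common translate. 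You never need half-space minimality, approximation by polytopes, or lower semicontinuity of~$P_K$. On the other hand, the paper's route yields the slightly stronger \cref{res:intersection} (with $C$ an arbitrary convex set, not necessarily a body), and its machinery is reused later in the quantitative analysis; your argument, while cleaner for \cref{res:monotonicity} itself, does not directly give that intermediate statement.
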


The proof of \cref{res:monotonicity} follows the same strategy of the corresponding results in~\cites{FFMMM15,CN22,BS22}, but with some simplifications due to the boundedness on the involved sets.
Although the proof of~\eqref{eq:monotonicity} may be known to experts, we briefly outline it below to keep the present note as self-contained as possible.  

Our second main aim is to provide a quantitative version of \cref{res:monotonicity} by proving a lower bound on the non-local deficit 
$\delta_K(B,A)=P_K(B)-P_K(A)$ in terms of the Hausdorff distance $h(A,B)$ between $A$ and $B$. 
Our strategy is modeled on the approach adopted in~\cite{S18} for dealing with~\eqref{eq:wulff} and, essentially, requires that the kernel $K$ possess suitable symmetry properties.
Precisely, we assume that $x\mapsto K(x)$ is symmetric-decreasing with respect to the component of $x\in\R^n$ which is orthogonal to some fixed direction $\nu\in\mathbb S^{n-1}$. 

\begin{definition}[$\nu^\perp$-symmetric decreasing kernel]
\label{def:nu_sym}
Let $n\ge2$.
We say that the kernel~$K$ is \emph{$\nu^\perp$-symmetric decreasing} for some $\nu\in\mathbb S^{n-1}$ if there exists a measurable function 
$k_\nu\colon[0,+\infty)^2\to[0,+\infty]$
such that 
$r\mapsto k_\nu(r,t)$ is decreasing for all $t\ge0$ and 
\begin{equation*}
K(x)=k_\nu(|x-(x\cdot\nu)\nu|,|x\cdot\nu|)
\quad
\text{for $x\in\R^n$}.
\end{equation*}
\end{definition}

If $K$ is $\nu^\perp$-symmetric decreasing for some $\nu\in\mathbb S^{n-1}$, then it also satisfies~\eqref{ass:sym} and, moreover, it is also $(-\nu)^\perp$-symmetric decreasing with $k_{-\nu}=k_\nu$.
In particular, \cref{def:nu_sym} applies to \emph{radially-symmetric decreasing} kernels, i.e.,
$K(x)=\phi(|x|)$, $x\in\R^n$, for some decreasing $\phi\colon[0,+\infty]\to[0,+\infty]$.
Indeed, in this case, one chooses
\begin{equation*}
k_\nu(r,t)=\phi\left(\sqrt{r^2+t^2}\right)
\quad
\text{for}\ r,t\ge0,
\end{equation*}
for any $\nu\in\mathbb S^{n-1}$.
Note that this holds for~$P_s$ in~\eqref{eq:s-perimeter}, with $\phi(r)=r^{-n-s}$ for $r>0$.

With this notation at disposal, our second main result reads as follows.
Here and in the rest of the paper, we let 
$\langle\nu\rangle=\set*{t\nu : t\in\R}\subset\R^n$.

\begin{theorem}[Quantitative $K$-monotonicity]
\label{res:quantitative}
Let $n\ge2$ and let $K$ be $\nu^\perp$-sym\-me\-tric decreasing for some $\nu\in\mathbb S^{n-1}$ as in \cref{def:nu_sym}.
There exists a function 
$f_{K,\,\nu}\colon[0,+\infty)^3\to[0,+\infty)$
with the following property.
If $A\subset B\subset\R^n$ are two convex bodies with Hausdorff distance $h(A,B)=|a-b|$ achieved for some $a\in A$ and $b\in B$ such that $a-b\in\langle\nu\rangle$, then 
\begin{equation}
\label{eq:quantitative}
P_K(A)
+
f_{K,\nu}\left(h(A,B),\mathscr H^{n-1}(B\cap\partial H),|B\cap H|\right)
\le 
P_K(B),
\end{equation} 
where $H=\set*{x\in\R^n : (b-a)\cdot(x-a)\le0}$.
\end{theorem}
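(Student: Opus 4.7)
The plan is to adapt the strategy of~\cite{S18}, originally devised for the anisotropic Euclidean setting, to the non-local case, with the $\nu^\perp$-symmetric decreasing hypothesis on $K$ playing the role of the symmetry of $\Phi$ in~\cite{S18}.

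As a first reduction, we replace $A$ by $C := B \cap H$. Since $a\in A$ realizes the distance from $b$ to the convex set $A$, the hyperplane $\partial H$ supports $A$ at $a$, giving $A \subset H$; hence $A \subset C \subset B$, and \cref{res:monotonicity} yields $P_K(A) \le P_K(C)$. It therefore suffices to prove $P_K(B) - P_K(C) \ge f_{K,\nu}(h, S, V)$, where $h := h(A,B)$, $S := \mathscr H^{n-1}(B\cap\partial H)$, and $V := |C|$. Setting $D := B \setminus H$, so that $B = C \cup D$ is a partition, the additivity of the double integrals in~\eqref{eq:K-perimeter} yields the identity
\begin{equation*}
P_K(B) = P_K(C) + P_K(D) - 2\, J_K(C,D),
\qquad
J_K(E,F) := \int_E\int_F K(x-y)\di x\di y,
\end{equation*}
so the task further reduces to proving
\begin{equation*}
P_K(D) - 2\, J_K(C,D) \ge f_{K,\nu}(h, S, V).
\end{equation*}

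To construct the lower bound, we exploit the structure of $D$: by convexity of $B$, it contains the cone $\Gamma := \co(\{b\}\cup(B\cap\partial H))$, whose base has $(n-1)$-measure $S$ and whose apex $b$ lies at $\nu$-distance $h$ below $\partial H$. For each $y \in D$ with $y\cdot\nu = \sigma < 0$, the integral $\int_C K(x-y)\di x$ can be estimated from above by means of a horizontal rearrangement: the monotonicity of $r\mapsto k_\nu(r,\tau-\sigma)$ permits the comparison of the slice integral of $K(\cdot-y)$ over $C \cap\{x\cdot\nu=\tau\}$ with the corresponding integral over the Schwarz rearrangement of that slice centred at $y_\perp + \tau\nu$, whose shape depends only on the slice $(n-1)$-measure. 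Meanwhile, $P_K(D)$ admits a slicewise lower bound coming from the flat top $B\cap\partial H\subset\partial D$ of measure $S$ and from the interaction of $D$ with the half-space $H$ via $\Gamma$. Combining these estimates produces the explicit function $f_{K,\nu}(h,S,V)$ after a Fubini argument over $(\tau,\sigma)\in(0,+\infty)\times(-h,0)$, in which the slice measures of $C$ are controlled by the layer-cake identity $V = \int_0^{+\infty}\mathscr H^{n-1}(C\cap\{x\cdot\nu=\tau\})\di\tau$.

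The main obstacle I expect is the horizontal rearrangement step: although the monotonicity of $k_\nu$ in its first argument makes the slicewise Schwarz comparison favourable in principle, the bookkeeping is delicate because the slices of $C$ vary with $\tau$ in shape and size. Ensuring that the resulting bound is strictly positive for every non-degenerate configuration and depends only on $(h, S, V)$ requires combining the rearrangement with convexity of $C$ — exploited for instance via Brunn--Minkowski to control the decay of the slice measures as $\tau\to+\infty$ — and then extracting a quantitative gap between $P_K(D)$ and $2\,J_K(C,D)$ that survives the integration over $\Gamma$.
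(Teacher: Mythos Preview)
Your initial reduction coincides with the paper's: replace $A$ by $C=B\cap H$ and rewrite the deficit as $P_K(D)-2J_K(C,D)$ with $D=B\setminus H$. From that point on, however, the argument remains a sketch and contains two genuine gaps.

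First, the slicewise rearrangement you propose cannot be carried out as stated. You suggest comparing $\int_{C\cap\{x\cdot\nu=\tau\}}K(x-y)\di\Haus^{n-1}$ with the integral over the disc ``centred at $y_\perp+\tau\nu$'', but this centre depends on $y$, so there is no single rearranged set $\widetilde C$ replacing $C$ uniformly over $y\in D$; the estimate you obtain is not of the form $J_K(C,D)\le J_K(\widetilde C,D)$. The paper resolves this by first shrinking $B$ to $C\cup\Gamma$ (your cone $\Gamma$) via \cref{res:monotonicity}, and then applying the Schwartz $\nu$-symmetrization of \cref{res:schwarz} \emph{simultaneously} to $C$ and $\Gamma$, yielding $P_K(\Gamma)-2J_K(C,\Gamma)\ge P_K(\Gamma^{\#\nu})-2J_K(C^{\#\nu},\Gamma^{\#\nu})$. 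After this, $\Gamma^{\#\nu}$ is a right circular cone determined by $(h,S)$, and $C^{\#\nu}$ is an axially symmetric convex body of volume $V$ with base disc of area~$S$; only now does the dependence on the original $B$ disappear.

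Second, even granting a reduction to symmetric sets, you give no mechanism for showing that the resulting lower bound is non-negative and depends only on $(h,S,V)$. The paper does not produce an explicit formula; it instead defines $f_{K,\nu}$ via the variational problem
\[
f_{K,\nu}(h,\omega_{n-1}r^{n-1},V)=P_K(\Gamma^{\#\nu})-2\sup_{F\in\mathcal F}J_K(F,\Gamma^{\#\nu}),
\]
where $\mathcal F$ is the compact family of axially symmetric convex bodies $F$ with $|F|=V$, base disc $D^\nu_r(a)$, and profile bounded by the extended cone. Compactness (via Blaschke selection) gives a maximizer $M\in\mathcal F$, and non-negativity follows because $P_K(\Gamma^{\#\nu})-2J_K(M,\Gamma^{\#\nu})=P_K(M\cup\Gamma^{\#\nu})-P_K(M)\ge0$ by \cref{res:monotonicity}. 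Your Brunn--Minkowski idea for controlling slice decay does not supply either ingredient; indeed the paper remarks that an explicit $f_{K,\nu}$ is out of reach precisely because of non-locality.
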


The function $f_{K,\nu}$ in \cref{res:quantitative} is implicit, due to the fact that, contrary to the local case~\cite{S18}, the non-locality of the functional~\eqref{eq:K-perimeter} prevents us to   compute the lower bound on the $K$-perimeter deficit explicitly.  
Nonetheless, the proof of \cref{res:quantitative} yields a partial characterization of optimal configurations.
In particular, inequality~\eqref{eq:quantitative} is sharp, in the sense that it is an equality at least in one non-trivial case.

The strategy of the proof of \cref{res:quantitative} is inspired by~\cites{S18,K21}.
We first reduce the given convex bodies to more symmetric ones by performing a \emph{Schwartz symmetrization} (done orthogonally to the given $\nu\in\mathbb S^{n-1}$), thanks to the symmetry of $K$ ensured by \cref{def:nu_sym}, and then we exploit a compactness argument.

As a consequence of \cref{res:quantitative}, in the radially-symmetric decreasing case, we get \cref{res:phi-quantitative} below.
Here and in the following, we let $P_\phi=P_{\phi(|\cdot|)}$ for any decreasing  function $\phi\colon[0,+\infty]\to[0,+\infty]$, we let $t^+=\max\set*{t,0}$ be the positive part of $t\in\R$ and, given $h,r\ge0$, we let
\begin{equation*}
\mathcal C_r^h
=
\bigcup_{x\in D_r}
\set*{(1-t)h\mathrm e_n+tx:t\in[0,1]}
\end{equation*}
be the (closed) right circular cone with base 
$D_r=\set*{x\in\R^n : |x|\le r,\ x_n=0}$ and height~$h$ in the direction $\mathrm e_n=(0,\dots,0,1)\in\mathbb S^{n-1}$.
Finally, we let $h=h(A,B)$ denote the Hausdorff distance between $A$ and~$B$ and, as usual, $\omega_{n-1}=\mathscr H^{n-1}(\mathbb S^{n-1})$ .

\begin{corollary}[Quantitative $\phi$-monotonicity]
\label{res:phi-quantitative}
Let $n\ge2$. If $A\subset B\subset\R^n$ are two convex bodies, then
\begin{equation}
\label{eq:phi-quantitative}
P_\phi(A)
+
\left(
P_\phi(\mathcal C_r^h)
-
\sigma_\phi\max\set*{\tfrac12P(B\cap H),|B\cap H|}
\right)^+
\le 
P_\phi(B)
\end{equation}
where $\sigma_\phi=\int_{\R^n}\min\set*{1,|x|}\,\phi(|x|)\di x$,
\begin{equation*}
r=\sqrt[n-1]{\frac{\mathscr H^{n-1}(B\cap\partial H)}{\omega_{n-1}}},
\quad
H=\set*{x\in\R^n : (b-a)\cdot(x-a)\le0},
\end{equation*}
and $a\in A$ and $b\in B$ such that $|a-b|=h(A,B)$.
\end{corollary}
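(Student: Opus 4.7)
Since $\phi$ is decreasing, the radial kernel $K(x)=\phi(|x|)$ is $\nu^\perp$-symmetric decreasing for every $\nu\in\mathbb S^{n-1}$, so \cref{res:quantitative} applies with $\nu=(b-a)/|b-a|$ and yields
\[
P_\phi(A)+f_{K,\nu}\bigl(h,\mathscr H^{n-1}(B\cap\partial H),|B\cap H|\bigr)\le P_\phi(B),\qquad h=h(A,B).
\]
The task is then to extract from the abstract function $f_{K,\nu}$ the explicit lower bound appearing in~\eqref{eq:phi-quantitative}.

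Set $C=B\cap H^c$. Since $a$ is the closest point of $A$ to $b$, a standard projection argument gives $A\subset H$, hence $A\subset B\cap H$; combined with the monotonicity \cref{res:monotonicity} and the disjoint-union identity
\[
P_\phi(E_1\sqcup E_2)=P_\phi(E_1)+P_\phi(E_2)-2\int_{E_1}\int_{E_2}\phi(|x-y|)\di x\di y,
\]
applied with $E_1=B\cap H$ and $E_2=C$, one obtains
\[
P_\phi(B)-P_\phi(A)\ge P_\phi(B)-P_\phi(B\cap H)=P_\phi(C)-2\int_{B\cap H}\int_C\phi(|x-y|)\di x\di y.
\]
To lower-bound the first term I would Schwartz-symmetrize $C$ perpendicularly to $\nu$: the resulting axisymmetric convex body $C^*$ satisfies $P_\phi(C^*)\le P_\phi(C)$ by a Riesz-type rearrangement inequality for radially decreasing kernels, and has the same height $h$ and the same base $(n-1)$-measure $\mathscr H^{n-1}(B\cap\partial H)$. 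By convexity, $C^*$ contains (up to a rigid motion) the round cone of height $h$ whose base disk has area $\mathscr H^{n-1}(B\cap\partial H)$, and a short computation shows that the corollary's normalization $r^{n-1}\omega_{n-1}=\mathscr H^{n-1}(B\cap\partial H)$ makes $\mathcal C_r^h$ a smaller such cone; applying \cref{res:monotonicity} again gives $P_\phi(\mathcal C_r^h)\le P_\phi(C^*)\le P_\phi(C)$. For the cross term, the interpolation estimate \cref{res:interpolation}, namely $P_\phi(E)\le\sigma_\phi\max\set{P(E)/2,|E|}$, applied to $E=B\cap H$ provides the required quantitative control.

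The main obstacle is to recover the sharp coefficient $\sigma_\phi$ rather than the $2\sigma_\phi$ produced by the naive estimate $\int_{B\cap H}\int_C\phi\le P_\phi(B\cap H)$. To this end one can exploit the reflection $R$ across $\partial H$: writing
\[
2\int_{B\cap H}\int_C\phi(|x-y|)\di x\di y=\int_{B\cap H}\int_{C\cup R(C)}\phi(|x-y|)\di x\di y,
\]
and arguing that, in the Schwartz-symmetrized extremal configuration produced by the proof of \cref{res:quantitative}, one has $C\cup R(C)\subset(B\cap H)^c$, so the right-hand side is dominated by $P_\phi(B\cap H)\le\sigma_\phi\max\set{P(B\cap H)/2,|B\cap H|}$. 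Combining the three estimates and taking the positive part (legitimate since $P_\phi(A)\le P_\phi(B)$, again by \cref{res:monotonicity}) concludes the proof of~\eqref{eq:phi-quantitative}.
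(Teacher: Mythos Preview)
Your overall strategy coincides with the paper's: reduce via \cref{res:quantitative} (or, equivalently, via its Step~1 identity) to
\[
\delta_\phi(B,A)\ \ge\ P_\phi(C^{\#\nu})-2L_\phi\bigl(E^{\#\nu},C^{\#\nu}\bigr),
\qquad E=B\cap H,
\]
identify $C^{\#\nu}$ with $\mathcal C_r^h$, and then control the interaction term by the interpolation bound of \cref{res:interpolation}. The only structural difference is that you take $C=B\cap H^c$ (the cap) while the paper takes the cone $\mathcal C(b,B\cap\partial H)$; the paper's choice is tidier because its Schwartz $\nu$-symmetrization is \emph{exactly} $\mathcal C_r^h$, so no extra monotonicity step is needed. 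Your variant also works (since $b$ is the point of $B$ farthest from $\partial H$, the symmetrized cap has height $h$, base radius $r$, and concave profile, hence contains $\mathcal C_r^h$), but it is an unnecessary detour.

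The genuine gap is the reflection step you introduce to pass from $2\sigma_\phi$ to $\sigma_\phi$. The displayed identity
\[
2\int_{B\cap H}\int_{C}\phi(|x-y|)\di x\di y
=\int_{B\cap H}\int_{C\cup R(C)}\phi(|x-y|)\di x\di y
\]
is false: reflecting $y\in C$ across $\partial H$ does \emph{not} preserve $|x-y|$ for $x\in B\cap H$, so there is no reason for $\int_{B\cap H}\int_{C}\phi$ and $\int_{B\cap H}\int_{R(C)}\phi$ to agree. Moreover, even in the symmetrized configuration, the inclusion $C^{\#\nu}\cup R(C^{\#\nu})\subset (E^{\#\nu})^c$ you invoke fails in general: $R(C^{\#\nu})$ is a cone on the $H$-side with the same base $D_r^\nu(a)$ as $E^{\#\nu}$, and it typically meets $E^{\#\nu}$ (take $E^{\#\nu}$ a cylinder of radius~$r$). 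So this route cannot recover the extra factor.

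In fact no trick is needed. The paper's own proof uses precisely the ``naive'' estimate
\[
L_\phi\bigl(E^{\#\nu},\mathcal C_r^h\bigr)\le L_\phi\bigl(E^{\#\nu},(E^{\#\nu})^c\bigr)=P_\phi(E^{\#\nu})
\le \sigma_\phi\max\set*{\tfrac12 P(E^{\#\nu}),|E^{\#\nu}|},
\]
together with $P(E^{\#\nu})\le P(B\cap H)$ and $|E^{\#\nu}|=|B\cap H|$. Plugged into the deficit inequality above this produces the subtrahend $2\sigma_\phi\max\{\tfrac12 P(B\cap H),|B\cap H|\}$; compare \cref{res:fractional}, where the same mechanism turns the interpolation constant $2^{1-s}$ into $2^{2-s}$. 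In other words, the discrepancy you are trying to repair is a missing factor of~$2$ in the stated constant, not a defect of the argument. Drop the reflection paragraph and your proof matches the paper's.
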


Inequality~\eqref{eq:phi-quantitative} is apparently worse than~\eqref{eq:quantitative}.
However, the lower bound given by~\eqref{eq:phi-quantitative} is more explicit than the one given by~\eqref{eq:quantitative}.
In fact, one is only left to estimate $P_\phi(\mathcal C^h_r)$, which may be explicitly done for a specific choice of~$\phi$.

A special instance of \cref{res:phi-quantitative} is the fractional case.
In fact, exploiting the \emph{fractional isoperimetric inequality} (see~\cite{FFMMM15} and the references therein) 
\begin{equation}
\label{eq:s-iso}
P_s(E)\ge c^{\rm iso}_{n,s}\,|E|^{\frac{n-s}{n}},
\quad 
c^{\rm iso}_{n,s}
=
\frac{P_s(B_1)}{|B_1|^{\frac{n-s}{n}}},
\end{equation}
valid for any measurable $E\subset\R^n$, we get the following result.
  
\begin{corollary}[Quantitative $s$-monotonicity]
\label{res:fractional}
Let $n\ge2$.
If $A\subset B\subset\R^n$ are two convex bodies, then
\begin{equation}
\label{eq:s-quantitative}
P_s(A)
+
\left(
c^{\rm iso}_{n,s}
\left(\tfrac{\omega_{n-1}}{n}\,h\,r^{n-1}
\right)^{1-\frac sn}
-
\tfrac{2^{2-s}\,n\,\omega_n}{s\,(1-s)}\,P(B\cap H)^s\,|B\cap H|^{1-s}
\right)^+
\le 
P_s(B)
\end{equation}
where $h,r\ge0$, $a\in A$, $b\in B$ and $H\subset\R^n$ are as in \cref{res:phi-quantitative}.
\end{corollary}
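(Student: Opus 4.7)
My plan is to proceed in parallel with the proof of \cref{res:phi-quantitative}, applied to the radial profile $\phi(r)=r^{-n-s}$, but to replace each of the two terms in the deficit by a sharper, fractional-specific estimate. Note that the generic upper bound $\sigma_\phi\max\set*{\tfrac12 P(B\cap H),|B\cap H|}$ appearing in~\eqref{eq:phi-quantitative} and the fractional upper bound $\tfrac{2^{2-s}n\omega_n}{s(1-s)}P(B\cap H)^s|B\cap H|^{1-s}$ appearing in~\eqref{eq:s-quantitative} are not comparable pointwise in $P(B\cap H)$ and $|B\cap H|$ (for instance, when $B\cap H$ is a small ball the former is much larger, while for other shapes the former can be smaller), so one cannot simply substitute into~\eqref{eq:phi-quantitative}; instead I retrace the derivation of~\eqref{eq:phi-quantitative}.

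The first ingredient is the fractional isoperimetric inequality~\eqref{eq:s-iso}: together with the standard cone-volume formula $|\mathcal C_r^h|=\tfrac{\omega_{n-1}}{n}hr^{n-1}$, it yields
\begin{equation*}
P_s(\mathcal C_r^h)\ge c^{\rm iso}_{n,s}\left(\tfrac{\omega_{n-1}}{n}hr^{n-1}\right)^{1-s/n},
\end{equation*}
matching the first term in the positive part of~\eqref{eq:s-quantitative}. The second ingredient is a fractional Gagliardo--Nirenberg-type interpolation. Starting from the pointwise estimate $|(V-z)\cap V^c|\le\min\set*{\tfrac12|z|P(V),|V|}$ underlying \cref{res:interpolation} (applied to $V=B\cap H$), I write $P_s(V)=\int_{\R^n}|z|^{-n-s}|(V-z)\cap V^c|\di z$, change to polar coordinates, and split the radial integral at the critical radius $r_0=2|V|/P(V)$. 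The inner piece contributes a multiple of $P(V)\,r_0^{1-s}/(1-s)$ and the outer a multiple of $|V|\,r_0^{-s}/s$; both simplify to scalar multiples of $P(V)^s|V|^{1-s}$, and summing yields an estimate of the form $P_s(V)\le\tfrac{2^{2-s}n\omega_n}{s(1-s)}P(V)^s|V|^{1-s}$.

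Plugging these two refined bounds into the derivation underlying \cref{res:phi-quantitative} and taking positive parts---to cover the case in which the deficit is negative, where~\eqref{eq:s-quantitative} reduces to the monotonicity $P_s(A)\le P_s(B)$ already ensured by \cref{res:monotonicity}---completes the proof. The main technical point is matching the constant $\tfrac{2^{2-s}n\omega_n}{s(1-s)}$ in the fractional interpolation: the factor $\tfrac{1}{s(1-s)}=\tfrac{1}{1-s}+\tfrac{1}{s}$ arises naturally from the two radial integrals, while the remaining factors follow from the relation $n\omega_n=\omega_{n-1}$ between the unit-ball and unit-sphere conventions tacitly used in the paper.
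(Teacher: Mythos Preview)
Your approach is essentially the paper's: both retrace the derivation behind \cref{res:phi-quantitative}, bounding $P_s(\mathcal C_r^h)$ from below via~\eqref{eq:s-iso} and bounding the interaction term from above via the fractional interpolation~\eqref{eq:s-interpolation} (which the paper records as the second part of \cref{res:interpolation} rather than re-deriving, and applies to $E^{\#\mathrm e_n}$ before invoking the Euclidean Schwartz inequality to pass to $B\cap H$). One bookkeeping caveat: the constant $2^{2-s}$ in~\eqref{eq:s-quantitative} is \emph{not} the interpolation constant for $P_s(V)$ alone---\eqref{eq:s-interpolation} carries $2^{1-s}$, and the additional factor of~$2$ enters through the term $2L_K(E^{\#\nu},C^{\#\nu})$ in~\eqref{eq:deficit_symm}---so your radial-split computation should be aimed at $2^{1-s}n\omega_n$ rather than $2^{2-s}n\omega_n$.
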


Inequality~\eqref{eq:s-quantitative} is worse than~\eqref{eq:phi-quantitative}, since we used~\eqref{eq:s-iso} to estimate the term $P_s(\mathcal C_r^h)$ from below in terms of $|\mathcal C_r^h|$ which, in turn, can be explicitly computed from~$h,r$. 
Anyway, the lower bound we get is more explicit than the one given by  \eqref{eq:phi-quantitative}.

\subsection{The 1D case}

In closing, let us comment on the special case $n=1$. 

The monotonicity property~\eqref{eq:archimede} becomes trivial for $n=1$, since $P(A)=P(B)=2$ for any two (not necessarily nested) segments $A,B\subset\R$.
Instead,  the case $n=1$ becomes non-trivial in the non-local setting. 
For example, in the fractional case, due to the  scaling and translation invariance of~\eqref{eq:s-perimeter}, we have
\begin{equation}
\label{eq:s-1}
P_s(B)-P_s(A)
=
c_s\left(|B|^{1-s}-|A|^{1-s}\right)
\end{equation} 
for any two (not necessarily nested) segments $A,B\subset\R$, where $c_s=P_s((0,1))$.

With~\eqref{eq:s-1} in mind, we have the following  result, which is inspired by~\cite{BS22}*{Lem.~2.31}.

\begin{proposition}[Case $n=1$]
\label{res:segmenti}
Assume that $\phi\colon[0,+\infty)\to(0,+\infty)$ satisfies\begin{equation}
\label{eq:2-decreasing}
\inf_{t>0}\frac{R^2\phi(Rt)-r^2\phi(rt)}{\phi(t)}
\ge
\psi(R)-\psi(r)
\quad
\text{for all}\ R\ge r\ge0,  
\end{equation}
for some increasing function $\psi\colon[0,+\infty)\to[0,+\infty)$. 
If $A,B\subset\R$ are two segments with $|A|\le|B|$, then, setting $c_\phi=P_\phi((0,1))$, 
\begin{equation}
\label{eq:segmenti}
P_\phi(A)
+
c_\phi
\big(\psi(|B|)-\psi(|A|)\big)
\le 
P_\phi(B).
\end{equation}
\end{proposition}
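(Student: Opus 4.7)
The plan is to reduce \eqref{eq:segmenti} to a pointwise application of hypothesis \eqref{eq:2-decreasing} under a double integral, exploiting the translation and scaling behaviour of $P_\phi$ on one-dimensional segments. First, using the translation invariance of $P_\phi$, I reduce to $A=(0,a)$ and $B=(0,b)$ with $0<a\le b$; the case $a=b$ is trivial since then $\psi(|B|)-\psi(|A|)=0$, so I assume $a<b$.

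Next, I derive a scaling representation of the $\phi$-perimeter of a segment. By the symmetry of the two components of the complement of $(0,L)$, one has
\[
P_\phi((0,L))=2\int_0^L\int_L^{+\infty}\phi(y-x)\di y\di x,
\]
and the rescaling $x=Ls$, $y=Lt$ with $(s,t)\in(0,1)\times(1,+\infty)$ gives
\[
P_\phi((0,L))=2L^2\int_0^1\int_1^{+\infty}\phi(L(t-s))\di t\di s.
\]
In particular, $c_\phi=2\int_0^1\int_1^{+\infty}\phi(t-s)\di t\di s$.

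Subtracting the representations for $L=b$ and $L=a$ yields
\[
P_\phi(B)-P_\phi(A)=2\int_0^1\int_1^{+\infty}\bigl[b^2\phi(b(t-s))-a^2\phi(a(t-s))\bigr]\di t\di s,
\]
and, since $(s,t)\in(0,1)\times(1,+\infty)$ forces $\tau:=t-s>0$, hypothesis \eqref{eq:2-decreasing} applied with $R=b$, $r=a$ produces the pointwise bound
\[
b^2\phi(b\tau)-a^2\phi(a\tau)\ge(\psi(b)-\psi(a))\,\phi(\tau).
\]
Integrating this inequality over $(s,t)\in(0,1)\times(1,+\infty)$ and recognising $c_\phi$ on the right delivers $P_\phi(B)-P_\phi(A)\ge c_\phi(\psi(b)-\psi(a))$, which is \eqref{eq:segmenti}.

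I do not anticipate any real obstacle once the scaling identity of the second step is in place: hypothesis \eqref{eq:2-decreasing} is tailored precisely to the integrand produced by that identity. The only point requiring attention is that the parametrisation keeps $\tau=t-s$ strictly positive, so that \eqref{eq:2-decreasing} may be invoked; this is immediate from the domain $(0,1)\times(1,+\infty)$. Note that the argument never uses $A\subset B$, only $|A|\le|B|$ together with translation and dilation in $\mathbb{R}$, which is consistent with the statement.
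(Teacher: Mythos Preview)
Your proof is correct and follows essentially the same approach as the paper: reduce by translation invariance to $A=(0,|A|)$, $B=(0,|B|)$, rescale the perimeter integrals to the unit interval so that the dependence on $|A|$ and $|B|$ appears exactly in the form $L^2\phi(L\,\cdot\,)$, subtract, and apply \eqref{eq:2-decreasing} pointwise under the integral to recover $c_\phi(\psi(|B|)-\psi(|A|))$. The only cosmetic difference is that the paper integrates over the full complement $(0,1)\times(0,1)^c$, whereas you use the reflection symmetry $x\mapsto L-x$ to reduce to $(0,1)\times(1,+\infty)$ with an extra factor of~$2$; this is immaterial.
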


Note that the assumption in~\eqref{eq:2-decreasing} implies that $\phi$ is decreasing.
In the fractional case, $\phi(r)=r^{-1-s}$ for $r>0$, so $\psi(r)=r^{1-s}$ for $r\ge0$, and we recover~\eqref{eq:s-1}.

\section{Proofs of the statements}

\label{sec:proofs}

\subsection{Proof of \texorpdfstring{\cref{res:monotonicity}}{non-local monotonicity}}

\label{subsec:monotonicity}

\cref{res:monotonicity} is a consequence of the following result.

\begin{proposition}[Intersection with convex sets]
\label{res:intersection}
If $E\subset\R^n$ is a convex body, then $P_K(E\cap C)\le P_K(E)$
for any convex  set $C\subset\R^n$.
\end{proposition}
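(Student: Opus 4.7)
The plan is to first establish the inequality in the case where $C=H$ is a closed half-space, and then to reach a general convex $C$ by writing it as a countable intersection of half-spaces.

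\emph{Translation formula.} Using~\eqref{ass:sym} together with the change of variable $h=y-x$, I would recast the $K$-perimeter of any measurable $F\subset\R^n$ as
\[
P_K(F)=\frac12\int_{\R^n}K(h)\,|F\triangle(F+h)|\di h,
\]
where $\triangle$ denotes symmetric difference. It is then enough to prove the pointwise bound
\begin{equation}\label{eq:pointwise-h}
|(E\cap C)\triangle((E\cap C)+h)|\le|E\triangle(E+h)|, \qquad h\in\R^n,
\end{equation}
and to integrate against $K(h)\di h$.

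\emph{Half-space case.} Fix a closed half-space $C=H$. I would verify~\eqref{eq:pointwise-h} by applying Fubini along the family of lines $\ell$ parallel to $h$. By the convexity of $E$, every non-empty slice $E\cap\ell$ is a (possibly degenerate) segment, while $H\cap\ell$ is either empty, a point, a half-line, or the whole line. Consequently, $(E\cap H)\cap\ell$ is a sub-segment of $E\cap\ell$, and the slicewise version of~\eqref{eq:pointwise-h} follows from the elementary identity
\[
|I\triangle(I+h)|=2\min\set*{|h|,|I|}
\]
applied to both $I=E\cap\ell$ and $I=(E\cap H)\cap\ell$, together with the monotonicity $|(E\cap H)\cap\ell|\le|E\cap\ell|$. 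Integrating first in the transverse direction and then in $h$ against $K$ gives $P_K(E\cap H)\le P_K(E)$.

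\emph{General convex $C$.} Every closed convex subset of $\R^n$ is the intersection of a countable family of closed half-spaces; thus write $C=\bigcap_{i\in\N}H_i$ and set $C_n=\bigcap_{i=1}^n H_i$. Iterating the half-space case yields $P_K(E\cap C_n)\le P_K(E)$ for each $n\in\N$. Since $E$ has finite measure and $C_n\downarrow C$, we have $\chi_{E\cap C_n}\to\chi_{E\cap C}$ pointwise and in $L^1$, so the $L^1$-lower semi-continuity of $P_K$ (a consequence of Fatou's lemma applied to~\eqref{eq:K-perimeter}) delivers
\[
P_K(E\cap C)\le\liminf_{n\to\infty}P_K(E\cap C_n)\le P_K(E).
\]
A non-closed convex $C$ agrees with its closure up to a Lebesgue-negligible set, so this case reduces to the closed one.

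\emph{Main obstacle.} No step is technically deep, but convexity is crucial throughout the slicing argument: indeed,~\eqref{eq:pointwise-h} already fails in $\R$ when $E$ is replaced by the union of two disjoint segments, so the reduction to the one-dimensional case genuinely needs each slice of $E$ to be a single interval, and each slice of $C$ to be a half-line (or all of $\ell$).
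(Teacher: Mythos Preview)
Your argument is correct and follows a genuinely different route from the paper's. Both proofs reduce to the half-space case and then pass to general convex $C$ by approximation and lower semicontinuity, but they handle the half-space case differently. The paper rewrites $P_K(E)-P_K(E\cap H)$ as a difference of interaction terms, compares it with the analogous expression for $F=E\cup H$, and recognises the latter as $P_K(F;B_R)-P_K(H;B_R)$, which is non-negative by the \emph{local minimality of half-spaces} (\cref{res:local-minimality}, quoted from the literature). Your approach instead passes through the translation formula $P_K(F)=\tfrac12\int_{\R^n}K(h)\,|F\triangle(F+h)|\di h$ and slices along lines parallel to~$h$, reducing everything to the one-dimensional identity $|I\triangle(I+t)|=2\min\{|t|,|I|\}$ for intervals. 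This is more elementary and self-contained: it bypasses \cref{res:local-minimality} entirely.

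One remark: your slicing argument actually establishes the pointwise inequality $|(E\cap C)\triangle((E\cap C)+h)|\le|E\triangle(E+h)|$ directly for \emph{any} convex $C$, not just half-spaces. The only facts used on each line $\ell$ are that $E\cap\ell$ and $(E\cap C)\cap\ell$ are intervals with the latter contained in the former, and these follow from the convexity of $E$ and of $E\cap C$. So the half-space reduction and the subsequent approximation step, while harmless, are in fact redundant in your approach.
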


The proof of \cref{res:intersection} exploits the \emph{local minimality} of half-spaces, see
\cref{res:local-minimality} below.
This latter result was proved first for~\eqref{eq:s-perimeter} in~\cite{ADM11}*{Prop.~17} and then for~\eqref{eq:K-perimeter} in~\cite{P20}*{Th.~1} (also see~\cite{C20}*{Cor.~2.5} and~\cite{BS22}*{Lem.~2.31}).
Here and below, for measurable sets $E,A\subset\R^n$, we let
\begin{equation}
\label{eq:K-perimeter_localized}
P_K(E;A)
=
\left(
\int_{E\cap A}\int_{E^c\cap A}
+
\int_{E\cap A}\int_{E^c\cap A^c}
+
\int_{E\cap A^c}\int_{E^c\cap A}
\right)\,K(x-y)\di x\di y
\end{equation}
be the \emph{$K$-perimeter of $E$ relative to~$A$}.

\begin{lemma}[Local minimality of half-spaces]
\label{res:local-minimality}
Let $R>0$.
If $H\subset\R^n$ is a half-space, then
$P_K(H;B_R)\le P_K(E;B_R)$ for any $E\subset\R^N$ such that $E\setminus B_R=H\setminus B_R$.
\end{lemma}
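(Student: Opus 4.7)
I plan to follow the strategy of~\cite{ADM11}*{Prop.~17} (for the fractional case) and~\cite{P20}*{Th.~1} (in full generality): expand the difference of the localized $K$-perimeters as a sum of non-local interaction integrals, then extract non-negativity from the symmetry of~$K$.

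First, since $P_K$ is translation invariant (as $K(x-y)$ depends only on the increment), I translate $E$, $H$ and $B_R$ together and assume without loss of generality that $\partial H$ passes through the origin; consequently, the linear reflection $\sigma\colon\R^n\to\R^n$ across $\partial H$ preserves $B_R$. I next decompose $\R^n$ into the four disjoint sets
\begin{equation*}
A = E\cap H, \quad B = H\setminus E, \quad C = E\setminus H, \quad D = H^c\setminus E,
\end{equation*}
noting that the hypothesis $E\setminus B_R = H\setminus B_R$ forces $B,C\subset B_R$, while $\sigma$ exchanges the unbounded pieces $A\setminus B_R = H\setminus B_R$ and $D\setminus B_R = H^c\setminus B_R$.

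Since $E$ and $H$ agree on $B_R^c$, the ``outside--outside'' contribution to $P_K(\,\cdot\,;B_R)$ appearing in~\eqref{eq:K-perimeter_localized} is the same for $E$ and $H$, and hence cancels. Combining this with the symmetry~\eqref{ass:sym} (which allows one to identify the ordered-pair integrals $\int_X\int_Y K(x-y)\di x\di y$ and $\int_Y\int_X K(x-y)\di x\di y$ for $X,Y\in\{A,B,C,D\}$), a direct expansion based on the four-set decomposition yields the finite-valued identity
\begin{multline*}
P_K(E;B_R)-P_K(H;B_R) = \int_A\!\int_B K(x-y)\di x\di y+\int_C\!\int_D K(x-y)\di x\di y \\ -\int_A\!\int_C K(x-y)\di x\di y-\int_B\!\int_D K(x-y)\di x\di y,
\end{multline*}
in which all four integrals are finite thanks to $B,C\subset B_R$ and~\eqref{ass:nts}.

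The remaining, crucial step is to show that the right-hand side above is non-negative; this is the main technical obstacle. Since $K$ is not assumed to be radially symmetric decreasing, no naive pointwise estimate is available. The idea is to change variables via $\sigma$ in the bulk contributions involving the unbounded pieces $A\setminus B_R$ and $D\setminus B_R$, use~\eqref{ass:sym} to collapse the resulting unbounded portions against one another, and then reduce the remainder to a manifestly non-negative integral supported on the bounded region $B\cup C\subset B_R$. Carrying out these manipulations carefully---as in~\cite{ADM11}*{Prop.~17} or~\cite{P20}*{Th.~1}---completes the proof.
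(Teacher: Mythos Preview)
The paper does not actually supply a proof of this lemma; it is quoted from the literature (\cite{ADM11}*{Prop.~17}, \cite{P20}*{Th.~1}, \cite{C20}*{Cor.~2.5}, \cite{BS22}*{Lem.~2.31}), so there is no in-paper argument to compare against. I therefore assess your sketch on its own.

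Your four-set decomposition and the resulting identity
\[
P_K(E;B_R)-P_K(H;B_R)=L_K(A,B)+L_K(C,D)-L_K(A,C)-L_K(B,D)
\]
are correct (after assuming, as one may, that $P_K(E;B_R)<+\infty$; note also that~\eqref{eq:K-perimeter_localized} has no ``outside--outside'' term, so nothing needs to cancel there). The finiteness of the four interaction terms follows because each is dominated by either $P_K(E;B_R)$ or $P_K(H;B_R)$.

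The gap is in the non-negativity step. You propose to change variables via the reflection $\sigma$ across $\partial H$ and then invoke~\eqref{ass:sym}. But~\eqref{ass:sym} is invariance of $K$ under the \emph{point} reflection $z\mapsto -z$, not under $\sigma$; for a generic kernel satisfying only~\eqref{ass:sym} and~\eqref{ass:nts} one has $K(\sigma z)\neq K(z)$, so after substituting $x\mapsto\sigma(x)$ the integrands do not match and the ``unbounded portions'' do not collapse. This reflection trick is exactly what makes the argument of~\cite{ADM11}*{Prop.~17} work, but only because the fractional kernel is radial and hence $\sigma$-invariant. The general result you cite from~\cite{P20}*{Th.~1} is proved by a \emph{different} mechanism---a calibration/null-Lagrangian argument showing that the first variation of $P_K$ at $H$ vanishes (equivalently, that the nonlocal mean curvature of a hyperplane is zero in the principal-value sense, which uses $K(-z)=K(z)$ via the point reflection $y\mapsto 2x-y$, not $\sigma$). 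So ``carrying out these manipulations as in~\cite{P20}'' does not match the reflection-based plan you describe; for general $K$ you need to replace the $\sigma$-reflection idea by the calibration argument (or the equivalent point-reflection computation).
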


\begin{proof}[Proof of \cref{res:intersection}]
Our strategy is a simplification of the one used for the proof of~\cite{BS22}*{Th.~2.29} (see also~\cite{FFMMM15}*{Lem.~B.1} and~\cite{CN22}*{Rem.~2.4}). 
Since $C\subset\R^n$ is convex, we can find a sequence of half-spaces $(H_k)_{k\in\N}$ such that the sets 
\begin{equation*}
C_N=\bigcap\limits_{k=1}^N H_k,
\quad 
N\in\N,
\end{equation*}
satisfy
$|C_N\bigtriangleup C|\to0^+$ as $N\to+\infty$.
Therefore, thanks to the lower semicontinuity  property of~$P_K$ (see~\cite{BS22}*{Lem.~2.10} for instance), we can assume that $C=H$ is a half-space.
We can hence write
\begin{align*}
P_K(E)&-P_K(E\cap H)
=
\left(\int_{E}\int_{E^c}-\int_{E\cap H}\int_{(E\cap H)^c}\right)\,K(x-y)\di x\di y
\\
&=
\left(\int_{E\cap H}+\int_{E\setminus H}\right)\int_{E^c}\,K(x-y)\di x\di y
-
\left(\int_{E^c}+\int_{E\setminus H}\right)\int_{E\cap H}\,K(x-y)\di x\di y
\\
&=
\left(\int_{E^c}-\int_{E\cap H}\right)\int_{E\setminus H}\,K(x-y)\di x\di y.
\end{align*}
Now let $R>0$ be such that $E\subset B_R$.
Defining $F=E\cup H$, one easily checks that $E\subset F$, $E\setminus H=F\setminus H$, $F\cap H=H$ and $F\setminus B_R=H\setminus B_R$.
A plain computation then yields
\begin{align*}
\left(\int_{E^c}-\int_{E\cap H}\right)\int_{E\setminus H}\,K(x-y)\di x\di y
&\ge
\left(\int_{F^c}-\int_{F\cap H}\right)\int_{F\setminus H}\,K(x-y)\di x\di y.
\end{align*}
We now observe that the right-hand side of the above inequality can be rewritten as
$P_K(F;B_R)-P_K(H;B_R)$ 
exploiting the definition in~\eqref{eq:K-perimeter_localized}.
Therefore, thanks to \cref{res:local-minimality}, we get that
\begin{equation*}
P_K(E)-P_K(E\cap H)
\ge
P_K(F;B_R)-P_K(H;B_R)
\ge
0,
\end{equation*}
yielding the conclusion.
\end{proof}

\subsection{Proof of \texorpdfstring{\cref{res:quantitative}}{quantitative monotonicity}}

We begin by adapting~\cite{M12}*{Sec.~19.2} (which corresponds to the choice $\nu=\mathrm e_1$ in what follows) to our setting.

Let $\nu\in\mathbb S^{n-1}$ be fixed.
For any $x\in\R^n$, we set 
\begin{equation*}
x'_\nu=x-(x\cdot\nu)\nu
\quad
\text{and}
\quad
x_\nu=(x\cdot\nu)\nu.
\end{equation*}
We naturally identify $x_\nu\in\langle\nu\rangle$ and $x_\nu'\in\langle\nu\rangle^\perp$, where $\langle\nu\rangle^\perp$ is the linear space orthogonal to $\langle\nu\rangle$, with points in $\R$ and $\R^{n-1}$, respectively.
In particular, with a slight abuse of notation, we write $x=x_\nu'+x_\nu=(x'_\nu,x_\nu)$ for any $x\in\R^n$.

\begin{definition}[Schwartz $\nu$-symmetrization] 
Given $E\subset\R^n$ with $|E|<+\infty$, we let 
\begin{equation}
\label{eq:nu-slice}
E^\nu_t=\set*{y\in\langle\nu\rangle^\perp:y+t\nu\in E},
\quad
t\in\R,
\end{equation}
be the \emph{slice of $E$ orthogonal to $\nu\in\mathbb S^{n-1}$}.
We hence let 
\begin{equation}
\label{eq:schwartz}
E^{\#\nu}
=
\set*{x\in\R^n : \omega_{n-1}|x'_\nu|^{n-1}\le\mathscr H^{n-1}\left(E^\nu_{x_\nu}\right)}
\end{equation}
be the \emph{Schwartz $\nu$-symmetrization of $E$}.
\end{definition}

The set $E^{\#\nu}$ is measurable, with slice $(E^{\#\nu})^\nu_t$ equal to an open ball such that 
\begin{equation*}
\mathscr H^{n-1}((E^{\#\nu})^\nu_t)=\mathscr H^{n-1}(E^\nu_t)
\end{equation*}
for each $t\in\R$.
Hence $|E^{\#\nu}|=|E|$ by Fubini--Tonelli's Theorem.

The following result is a non-local analog of the Schwartz inequality $P(E)\ge P(E^{\#\nu})$ for the Eulidean perimeter (see~\cite{M12}*{Th.~19.11}).
Actually, inequality~\eqref{eq:schwartz_interaction} below is a special case of~\cite{BLL74}*{Lem.~3.2}, but we give a direct and simpler proof of it via the well-known \textit{Riesz rearrangement inequality} (see~\cite{LL01}*{Ch.~3} for a detailed presentation).
Here and in the rest of the paper, we let
\begin{equation*}
L_K(E,F)=\int_E\int_F K(x-y)\di x\di y
\end{equation*}
be the \emph{$K$-interaction functional} between the two measurable sets $E,F\subset\R^n$. 

\begin{lemma}[Non-local Schwartz $\nu$-inequality]
\label{res:schwarz} 
Let $n\ge2$ and let $K$ be $\nu^\perp$-symmetric decreasing kernel for some $\nu\in\mathbb S^{n-1}$ as in \cref{def:nu_sym}.
If $E,F\subset\R^n$ are such that $|E|,|F|<+\infty$, then
\begin{equation}
\label{eq:schwartz_interaction}
L_K(E,F)\le L_K(E^{\#\nu},F^{\#\nu}).    
\end{equation}
Moreover, if $P_K(E)<+\infty$, then also $P_K(E^{\#\nu})<+\infty$ with
\begin{equation}
\label{eq:schwartz_perimeter}
P_K(E)\ge P_K(E^{\#\nu}).    
\end{equation}
\end{lemma}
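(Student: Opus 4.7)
The plan is to prove~\eqref{eq:schwartz_interaction} by a slice-wise application of the classical Riesz rearrangement inequality on $\R^{n-1}$, and then to derive~\eqref{eq:schwartz_perimeter} from~\eqref{eq:schwartz_interaction} via a truncation of $K$ that restores integrability.

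For~\eqref{eq:schwartz_interaction}, I write $x=(x',s)$ with $x'\in\langle\nu\rangle^\perp\cong\R^{n-1}$ and $s=x\cdot\nu\in\R$, and analogously $y=(y',t)$. Since $K(x-y)=k_\nu(|x'-y'|,|s-t|)$ by \cref{def:nu_sym}, Fubini--Tonelli gives
\[
L_K(E,F)=\int_\R\int_\R\left(\int_{\R^{n-1}}\int_{\R^{n-1}}\chi_{E^\nu_s}(x')\chi_{F^\nu_t}(y')\,k_\nu(|x'-y'|,|s-t|)\di x'\di y'\right)\di s\di t.
\]
For each fixed $(s,t)$, the inner integral is a Riesz triple $\int f(x')g(y')h(x'-y')\di x'\di y'$ on $\R^{n-1}$ with $f=\chi_{E^\nu_s}$, $g=\chi_{F^\nu_t}$, and $h(z)=k_\nu(|z|,|s-t|)$ a radial, non-increasing kernel. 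Symmetric decreasing rearrangement on $\R^{n-1}$ sends $\chi_{E^\nu_s}$ and $\chi_{F^\nu_t}$ to the indicators of the origin-centered balls of the same $(n-1)$-measure and leaves $h$ unchanged, so the classical Riesz rearrangement inequality bounds the inner integral above by its rearranged counterpart. By the very definition~\eqref{eq:schwartz}, those balls are exactly the slices $(E^{\#\nu})^\nu_s$ and $(F^{\#\nu})^\nu_t$; integrating in $(s,t)$ yields~\eqref{eq:schwartz_interaction}.

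For~\eqref{eq:schwartz_perimeter}, the formal identity $P_K(E)=|E|\int K-L_K(E,E)$ is an $\infty-\infty$ whenever $K\notin L^1(\R^n)$ (as for the fractional kernel), so I work with the truncations $K_M=\min\set*{K,M}$, $M>0$. Capping a decreasing function by a constant preserves monotonicity, so $K_M$ is still $\nu^\perp$-symmetric decreasing; moreover $K_M\le M$ and assumption~\eqref{ass:nts} yields $\int_{|x|\ge1}K(x)\di x<+\infty$, hence $K_M\in L^1(\R^n)$. For such an integrable symmetric kernel, the expansion $|\chi_E(x)-\chi_E(y)|=\chi_E(x)+\chi_E(y)-2\chi_E(x)\chi_E(y)$ together with Fubini rigorously gives
\[
P_{K_M}(E)=|E|\int_{\R^n}K_M(z)\di z-L_{K_M}(E,E),
\]
and the analogous identity for $E^{\#\nu}$. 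Since $|E^{\#\nu}|=|E|$ and the first part applied to $K_M$ with $F=E$ gives $L_{K_M}(E,E)\le L_{K_M}(E^{\#\nu},E^{\#\nu})$, I conclude $P_{K_M}(E)\ge P_{K_M}(E^{\#\nu})$. Letting $M\uparrow+\infty$, the nonnegative integrands converge monotonically, so monotone convergence yields $P_{K_M}(E)\uparrow P_K(E)$ and likewise for $E^{\#\nu}$; passing to the limit produces~\eqref{eq:schwartz_perimeter}, and in particular $P_K(E^{\#\nu})\le P_K(E)<+\infty$.

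The main obstacle I anticipate is precisely the $\infty-\infty$ issue in the perimeter identity under assumption~\eqref{ass:nts} alone, which rules out a direct application of Riesz to $P_K$. The choice $K_M=K\wedge M$ is tailored to bypass this: it preserves $\nu^\perp$-symmetric decrease (so the first part still applies), it is $L^1$ (so the algebraic split becomes rigorous), and it increases monotonically to $K$ (so monotone convergence handles the limit cleanly). Once the truncation is set up, the rest is just Riesz on $\R^{n-1}$ plus a routine limit passage.
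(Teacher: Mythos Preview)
Your proof is correct. For~\eqref{eq:schwartz_interaction} you follow exactly the paper's route: slice along $\nu$ via Fubini--Tonelli and apply the Riesz rearrangement inequality on $\R^{n-1}$ fibre by fibre. For~\eqref{eq:schwartz_perimeter}, however, you take a genuinely different and cleaner path. The paper never truncates~$K$; instead it slices $P_K(E)$ as in~\eqref{eq:perimetro_rotto}, performs a layer-cake decomposition of each fibre kernel $z\mapsto k_\nu(|z|,|x_\nu-y_\nu|)$, and uses the hypothesis $P_K(E)<+\infty$ to argue that the superlevel sets $\{k_\nu(|\cdot|,|x_\nu-y_\nu|)>t\}$ are $(n-1)$-discs of finite measure for a.e.\ $t$, so that one may legitimately split the interaction with the complement as a difference of finite terms and then apply Riesz at each level. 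Your truncation $K_M=K\wedge M$ sidesteps this entirely: it remains $\nu^\perp$-symmetric decreasing, it lies in $L^1(\R^n)$ thanks to the standing assumption~\eqref{ass:nts}, the identity $P_{K_M}(E)=|E|\int K_M-L_{K_M}(E,E)$ is then rigorous, and monotone convergence closes the argument. The only trade-off is that your approach invokes~\eqref{ass:nts} to secure $K_M\in L^1$, whereas the paper's layer-cake argument for~\eqref{eq:schwartz_perimeter} uses only $P_K(E)<+\infty$; since~\eqref{ass:nts} is a standing assumption anyway, this costs nothing here, and your route is noticeably shorter.
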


\begin{proof}
Let $k_\nu$ be the function given by \cref{def:nu_sym}.
By Tonelli's Theorem, and recalling the definition of slice in~\eqref{eq:nu-slice}, we can write
\begin{equation}
\label{eq:interaction_coarea}
L_K(E,F)
=
\int_\R\int_\R
L_{k_\nu(|\,\cdot\,|,|x_\nu-y_\nu|)}(E^\nu_{x_\nu},F^\nu_{y_\nu})
\di x_\nu\di y_\nu,
\end{equation}
where
\begin{equation}
\label{eq:interaction_nu}
L_{k_\nu(|\,\cdot\,|,|x_\nu-y_\nu|)}(E^\nu_{x_\nu},F^\nu_{y_\nu})
=
\int_{\R^{n-1}}\int_{\R^{n-1}}
\chi_{E^\nu_{x_\nu}}(x'_\nu)
\,
\chi_{F^\nu_{y_\nu}}(y'_\nu)
\,
k_\nu(|x_\nu'-y_\nu'|,|x_\nu-y_\nu|)
\di x_\nu'\di y_\nu'.
\end{equation}
Since $|E|,|F|<+\infty$, we also have $\mathscr H^{n-1}(E^\nu_{x_\nu}),\mathscr H^{n-1}(F^\nu_{y_\nu})<+\infty$ for a.e.\ $x_\nu,y_\nu\in\langle\nu\rangle$.
Since the function $z\mapsto k_\nu(|z|,|x_\nu-y_\nu|)$ is radially-symmetric decreasing by \cref{def:nu_sym}, by Riesz rearrangement inequality~\cite{LL01}*{Th.~3.4} 
we infer that
\begin{equation}
\label{eq:simmetrizzo}
L_{k_\nu(|\,\cdot\,|,|x_\nu-y_\nu|)}(E^\nu_{x_\nu},F^\nu_{y_\nu})
\le 
L_{k_\nu(|\,\cdot\,|,|x_\nu-y_\nu|)}
(
D[E_{x_\nu}^\nu]
,
D[F_{y_\nu}^\nu]
)
\quad
\text{for a.e.\ $x_\nu,y_\nu\in\langle\nu\rangle$}.
\end{equation}
Here $D[E_{x_\nu}^\nu]
$ and $D[F_{y_\nu}^\nu]$ are the closed $(n-1)$-dimensional discs in~$\langle\nu\rangle^\perp$ centered at the origin with $(n-1)$-dimensional volume equal to $\mathscr H^{n-1}(E^\nu_{x_\nu})$ and $\mathscr H^{n-1}(F^\nu_{y_\nu})$, respectively.
Integrating~\eqref{eq:simmetrizzo}, and again using Tonelli's Theorem and~\eqref{eq:schwartz} and~\eqref{eq:interaction_coarea}, we get~\eqref{eq:schwartz_interaction}.
Similarly, choosing $F=E$ in~\eqref{eq:interaction_coarea}, we can write
\begin{equation}
\label{eq:perimetro_rotto}
P_K(E)
=
\int_\R\int_\R
L_{k_\nu(|\,\cdot\,|,|x_\nu-y_\nu|)}(E^\nu_{x_\nu},(E^c)^\nu_{y_\nu})
\di x_\nu\di y_\nu.
\end{equation}
Since $P_K(E)<+\infty$, we must have
\begin{equation}
\label{eq:finitezza}
L_{k_\nu(|\,\cdot\,|,|x_\nu-y_\nu|)}(E^\nu_{x_\nu},(E^c)^\nu_{y_\nu})
<+\infty
\quad
\text{for a.e.\ $x_\nu,y_\nu\in\langle\nu\rangle$}.
\end{equation}
Now, for any fixed $x_\nu,y_\nu\in\langle\nu\rangle$, we can write 
\begin{equation*}
k_\nu\left(|z|,|x_\nu-y_\nu|\right)
=
\int_0^{+\infty}
\chi_{\set*{k_\nu(|\cdot|,|x_\nu-y_\nu|)>t}}(z)\di t,
\quad
\text{for}\
z\in\langle\nu\rangle^\perp.
\end{equation*}
Therefore, for a.e.\ $x_\nu,y_\nu\in\langle\nu\rangle$, we can decompose  
\begin{equation}
\label{eq:livelli}
L_{k_\nu(|\,\cdot\,|,|x_\nu-y_\nu|)}(E^\nu_{x_\nu},(E^c)^\nu_{y_\nu})
=
\int_0^{+\infty}
L_{\chi_{\set*{k_\nu(|\cdot|,|x_\nu-y_\nu|)>t}}}(E^\nu_{x_\nu},(E^c)^\nu_{y_\nu})
\di t,
\end{equation}
where, as in~\eqref{eq:interaction_nu}, we have
\begin{equation*}
\begin{split}
L_{\chi_{\set*{k_\nu(|\cdot|,|x_\nu-y_\nu|)>t}}}&(E^\nu_{x_\nu},(E^c)^\nu_{y_\nu})
\\
&=
\int_{\R^{n-1}}\int_{\R^{n-1}}
\chi_{E^\nu_{x_\nu}}(x'_\nu)
\,
\chi_{(E^c)^\nu_{y_\nu}}(y'_\nu)
\,
\chi_{\set*{k_\nu(|\cdot|,|x_\nu-y_\nu|)>t}}(x_\nu'-y_\nu')
\di x_\nu'\di y_\nu'.
\end{split}
\end{equation*}
We now observe that, for a.e.\ $x_\nu,y_\nu\in\langle\nu\rangle$ and for any $t>0$, the set 
\begin{equation*}
\set*{z\in\langle\nu\rangle^\perp : k_\nu(|z|,|x_\nu-y_\nu|)>t}
\end{equation*}
is an $(n-1)$-dimensional disc (possibly empty or the entire subspace $\langle\nu\rangle^\perp$), with
\begin{equation}
\label{eq:finitezza_t}
\mathscr H^{n-1}
\left(
\set*{z\in\langle\nu\rangle^\perp : k_\nu(|z|,|x_\nu-y_\nu|)>t}
\right)
<
+\infty
\end{equation}
for a.e.\ $t>0$. 
Indeed, if this is not the case, then, for some $S\subset\R$ with $\mathscr H^1(S)>0$,  
\begin{equation*}
\begin{split}
L_{\chi_{\set*{k_\nu(|\cdot|,|x_\nu-y_\nu|)>t}}}(E^\nu_{x_\nu},(E^c)^\nu_{y_\nu})
&=
\int_{\R^{n-1}}\int_{\R^{n-1}}
\chi_{E^\nu_{x_\nu}}(x'_\nu)
\,
\chi_{(E^c)^\nu_{y_\nu}}(y'_\nu)
\di x_\nu'\di y_\nu'
\\
&=
\mathscr H^{n-1}(E^\nu_{x_\nu})
\,
\mathscr H^{n-1}((E^c)^\nu_{y_\nu})
=+\infty
\end{split}
\end{equation*}
for all $t\in S$, which, together with~\eqref{eq:livelli}, contradicts~\eqref{eq:finitezza}.
Now, for any $t>0$ yielding~\eqref{eq:finitezza_t}, we have $\chi_{\set*{k_\nu(|\cdot|,|x_\nu-y_\nu|)>t}}\in L^1(\langle\nu\rangle^\perp,\mathscr H^{n-1})$.
Hence, we can decompose
\begin{equation*}
\begin{split}
&L_{\chi_{\set*{k_\nu(|\cdot|,|x_\nu-y_\nu|)>t}}}
(E^\nu_{x_\nu},(E^c)^\nu_{y_\nu})
=
L_{\chi_{\set*{k_\nu(|\cdot|,|x_\nu-y_\nu|)>t}}}(E^\nu_{x_\nu},\langle\nu\rangle^\perp)
-
L_{\chi_{\set*{k_\nu(|\cdot|,|x_\nu-y_\nu|)>t}}}(E^\nu_{x_\nu},E^\nu_{y_\nu})
\\
&\qquad=
\mathscr H^{n-1}(E^\nu_{x_\nu})
\,
\mathscr H^{n-1}(\set*{k_\nu(|\cdot|,|x_\nu-y_\nu|)>t})
-
L_{\chi_{\set*{k_\nu(|\cdot|,|x_\nu-y_\nu|)>t}}}(E^\nu_{x_\nu},E^\nu_{y_\nu}).
\end{split}
\end{equation*}
Since 
$\mathscr H^{n-1}(E^\nu_{x_\nu})
=
\mathscr H^{n-1}(D[E_{x_\nu}^\nu])$
by~\eqref{eq:schwartz}
and
\begin{equation*}
L_{\chi_{\set*{k_\nu(|\cdot|,|x_\nu-y_\nu|)>t}}}(E^\nu_{x_\nu},E^\nu_{y_\nu})
\le 
L_{\chi_{\set*{k_\nu(|\cdot|,|x_\nu-y_\nu|)>t}}}(
D[E_{x_\nu}^\nu]
,
D[E_{y_\nu}^\nu])
\end{equation*}
by~\eqref{eq:simmetrizzo} (applied with $E=F$), again recalling~\eqref{eq:schwartz} we readily get that 
\begin{equation*}
L_{\chi_{\set*{k_\nu(|\cdot|,|x_\nu-y_\nu|)>t}}}
(E^\nu_{x_\nu},(E^c)^\nu_{y_\nu})
\ge 
L_{\chi_{\set*{k_\nu(|\cdot|,|x_\nu-y_\nu|)>t}}}((E^{\#\nu})^\nu_{x_\nu},((E^{\#\nu})^c)^\nu_{y_\nu}).
\end{equation*}
Integrating back in $t>0$ and recalling~\eqref{eq:livelli}, we get
\begin{equation*}
L_{\chi_{\set*{k_\nu(|\cdot|,|x_\nu-y_\nu|)>t}}}(E^\nu_{x_\nu},(E^c)^\nu_{y_\nu})
\ge 
L_{\chi_{\set*{k_\nu(|\cdot|,|x_\nu-y_\nu|)>t}}}
((E^{\#\nu})^\nu_{x_\nu},((E^{\#\nu})^c)^\nu_{y_\nu}).
\end{equation*}
Finally, integrating back in $x_\nu,y_\nu\in\langle\nu\rangle$ and recalling~\eqref{eq:perimetro_rotto}, we get~\eqref{eq:schwartz_perimeter}.
\end{proof}

In the proof of \cref{res:quantitative}, we will use the following notation.
Given $p\in\R^n$ and a (non-empty) set $S\subset\R^n$, we define the cones with vertex $p$ and base $S$
\begin{equation*}
\mathcal C(p,S)
=
\bigcup_{s\in S}\set*{p+t(s-p) : t\in[0,1]},
\quad
\mathcal C_\infty(p,S)
=
\bigcup_{s\in S}\set*{p+t(s-p) : t\ge0}.
\end{equation*}
Note that, if $S$ is convex, then also $\mathcal C(p,S)$ and $\mathcal C_\infty(p,S)$ are convex.
Moreover, if $S$ is bounded, then also $\mathcal C(p,S)$ is bounded.
Finally, given $p\in\R^n$, $r\ge0$ and $\nu\in\mathbb S^{n-1}$, we let 
\begin{equation*}
D_r^\nu(p)=p+D^\nu_r(0),
\quad
D_r^\nu(0)=
\set*{x\in\R^n : x\in\langle\nu\rangle^\perp,|x|\le r},
\end{equation*} 
be the closed $(n-1)$-dimensional disc centered at $p$, with radius $r$, and orthogonal to $\nu$.  

\begin{proof}[Proof of \cref{res:quantitative}]
Let $\nu\in\mathbb S^{n-1}$, $a\in A$, $b\in B$, $h=h(A,B)=|a-b|$ and $H\subset\R^n$ be as in the statement.
Since $A\subset B$ are compact convex sets, we have that
\begin{equation*}
h(A,B)=|a-b|=\max_{y\in B}\min_{x\in  A} |x-y|.
\end{equation*}
Consequently, $a$ is the orthogonal projection of~$b$ on~$A$. 
By definition of~$H$ and by minimality of the projection, the closed hyperplane~$\partial H$ is a supporting one for~$A$ in~$a$.
As a consequence, we must have that $A\subset B\cap H$.

\begin{figure}
\centering
\begin{tikzpicture}[scale=.6]
\fill [teal, opacity=.3] (0,0)--(0,5.5)--(5,5.95)--(5,4)--(1,5)--(1,2)--(5,1)--(5,0);
\fill [teal, opacity=.3] (5,0)--(6,0)--(10,2);
\fill [teal, opacity=.3] (6,6)--(10,2)--(5,5.95);
\fill [magenta, opacity=.3] (1,2)--(1,5)--(5,4)--(5,1)--(1,2);
\fill [gray, opacity=.2] (5,5.95)--(10,2)--(5,0)--(5,6);
\draw (0,0)--(0,5.5)--(6,6)--(10,2)--(6,0)--(0,0);
\draw (1,2)--(1,5)--(5,4)--(5,1)--(1,2);
\draw [dashed] (5,-1)--(5,7);
\draw [dashed] (5,5.95)--(10,2)--(5,0);
\draw (3,3) node {$A$};
\draw (1,1) node {$B$};
\draw (6,4) node {$C$};
\draw (5,6.5) node [left] {$\partial H$};
\draw (5,2) node [above left] {$a$};
\draw (5,2) node {\tiny\textbullet};
\draw [->,dashed] (-.5,2)--(11,2);
\draw (10,2) node [above right] {$b$};
\draw (10,2) node {\tiny\textbullet};
\draw (11,2) node [right] {$\nu$};
\draw [decorate,decoration={brace,amplitude=5pt},xshift=0pt,yshift=5pt] (5,2) -- (10,2) node [black,midway,yshift=12pt] {$h$};
\draw (1.5,5.75) node [above] {$E=B\cap H$};
\begin{scope}[shift={(14,-1)}]
\fill [gray, opacity=.2] (5,6)--(10,3)--(5,0)--(5,6);
\fill [teal, opacity=.3] (0,0.5)--(0,5.5)--(5,6)--(5,0)--(0,0.5);
\draw (0,0.5)--(0,5.5)--(5,6)--(10,3)--(5,0)--(0,0.5);
\draw [->,dashed] (-0.5,3)--(11,3);
\draw [dashed] (5,-1)--(5,7);
\draw (5,0)--(5,6);
\draw (11,3) node [right] {$\nu$};
\draw (6,4.5) node {$C^{\#\nu}$};
\draw (2,4.5) node {$E^{\#\nu}$};
\draw (5,3) node [above left] {$a$};
\draw (5,3) node {\tiny\textbullet};
\draw (10,3) node [above right] {$b$};
\draw (10,3) node {\tiny\textbullet};
\draw [decorate,decoration={brace,amplitude=5pt},xshift=0pt,yshift=5pt] (5,3) -- (10,3) node [black,midway,yshift=12pt] {$h$};
\draw [decorate,decoration={brace,amplitude=5pt,mirror},xshift=-2pt,yshift=0pt] (5,3) -- (5,0) node [black,midway,xshift=-12pt] {$r$};
\draw (5,6.5) node [left] {$\partial H$};
\end{scope}
\end{tikzpicture}
\caption{Reduction to symmetric sets in the proof of \cref{res:quantitative}: an initial configuration (left) and its symmetrization (right).}
\label{fig:sym}
\end{figure}
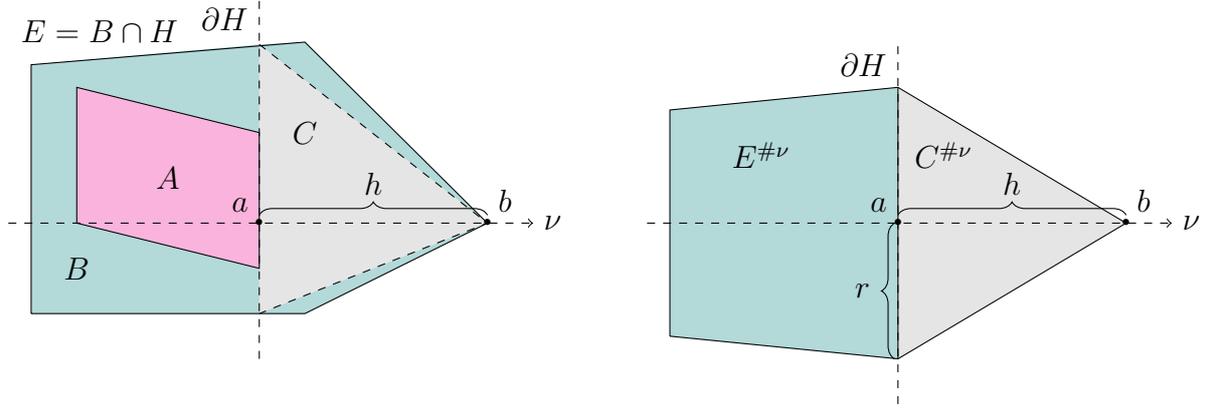

\vspace{1ex}

\textit{Step~1: reduction to symmetric sets}.
Let $E=B\cap H$ and $C=\mathcal C(b,B\cap \partial H)$ be the (bounded and closed) cone with vertex~$b$ and base $B\cap\partial H$.
Note that $E$, $C$ and $E\cup C$ are convex bodies.
Since $A\subset E$ and $E\cup C\subset B$, we have
\begin{equation*}
\delta_K(B,A)
\ge
\delta_K(B,E)
\ge
\delta_K(E\cup C,E),
\end{equation*}
with equality if $A=E$ and $B=E\cup C$.
Since $|E\cap C|=0$ and $E^c=C\cup(E^c\setminus C)=C\cup(E^c\cap C^c)$, we can write
\begin{equation}
\label{eq:insiemistica}
\begin{split}
P_K(E&\cup C)-P_K(E)
=
L_K(E\cup C,E^c\cap C^c)
-
L_K(E,E^c)
\\
&=
L_K(E,E^c\cap C^c)
+
L_K(C,E^c\cap C^c)
-
L_K(E,C)
-
L_K(E,E^c\cap C^c)
\\
&=
L_K(C,E^c\cap C^c)
-
L_K(E,C)
\\
&=
L_K(C,C^c\setminus E)
-
L_K(E,C)
\\
&=
L_K(C,C^c)
-
L_K(C,E)
-
L_K(E,C)
\\
&=
P_K(C)-
2L_K(E,C).
\end{split}
\end{equation}
We now apply \cref{res:schwarz} to the convex bodies~$E$ and~$C$, so that 
\begin{equation*}
L_K(E,C)\le L_K(E^{\#\nu},C^{\#\nu})
\quad
\text{and}
\quad
P_K(C)\ge P_K(C^{\#\nu}).
\end{equation*}
As a consequence, reading the chain of equalities in~\eqref{eq:insiemistica} backwards, we get that
\begin{equation}
\label{eq:deficit_symm}
\delta_K(B,A)
\ge 
\delta_K(E\cup C,E)
\ge
\delta_K(E^{\#\nu}\cup C^{\#\nu},E^{\#\nu})
=
P_K(C^{\#\nu})-
2L_K(E^{\#\nu},C^{\#\nu}).
\end{equation} 
In particular, setting $A^*=E^{\#\nu}$ and $B^*=E^{\#\nu}\cup C^{\#\nu}$, we proved that 
\begin{equation*}
\delta_K(B,A)
\ge
\delta_K(B^*,A^*),
\end{equation*}
with equality obviously if $A=A^*$ and $B=B^*$,
reducing the proof of \cref{res:quantitative} to the case of the symmetric convex bodies $A^*,B^*$.

\vspace{1ex}

\textit{Step~2: description of $C^{\#\nu}$}.  
We now note that $C^{\#\nu}$ is uniquely determined by $a-b$ and $\mathscr H^{n-1}(B\cap \partial H)$.
Indeed, since by definition 
\begin{equation*}
(x-a)\cdot\nu=0
\quad
\text{for all}\
x\in\partial H,
\end{equation*}
the half-space $H$ is preserved under Schwartz $\nu$-symmetrization, $H^{\#\nu}=H$.
In particular, we recognize that $E^{\#\nu}=B^{\#\nu}\cap H$ and hence $C^{\#\nu}
=
\mathcal C(b,E^{\#\nu}\cap \partial H)
=
\mathcal C(b,B^{\#\nu}\cap \partial H)$.
Moreover, since $a\in E^{\#\nu}$ is still the orthogonal projection of $b\in C^{\#\nu}$ on~$E^{\#\nu}$, $\partial H$ is a supporting hyperplane for $E^{\#\nu}$ in~$a$ and thus, since $E^{\#\nu}\subset H$, we conclude that
\begin{equation*}
h=h(A,B)=h(E,E\cup C)=h(E^{\#\nu},E^{\#\nu}\cup C^{\#\nu}).
\end{equation*}
Finally, by definition of Schwartz symmetrization in~\eqref{eq:schwartz}, we have
\begin{equation*}
\mathscr H^{n-1}(B\cap \partial H)
=
\mathscr H^{n-1}(B^{\#\nu}\cap \partial H)
=
\mathscr H^{n-1}(E^{\#\nu}\cap \partial H).
\end{equation*} 
In conclusion, we get that 
$C^{\#\nu}
=
\mathcal C(b,D_r^\nu(a))$,
where 
\begin{equation}
\label{eq:disco}
r=\sqrt[n-1]{\frac{\mathscr H^{n-1}(B\cap \partial H)}{\omega_{n-1}}},
\end{equation}
see \cref{fig:sym}.
Arguing similarly, also the (unbounded and closed) cone $C_\infty^{\#\nu}=\mathcal C_\infty(b,B^{\#\nu}\cap \partial H)$ is uniquely determined as $C_\infty^{\#\nu}=\mathcal C_\infty(b,D^\nu_r(a))$.
 
\vspace{1ex}

\textit{Step~3: description of $E^{\#\nu}$}.
By definition, each slice $(E^{\#\nu})^\nu_t$ with $t\in\R$ is a (possibly empty) bounded and closed $(n-1)$-dimensional disc.
More precisely, from now on assuming $(b-a)\cdot\nu\ge0$ without loss of generality, we have that 
\begin{equation*}
E^{\#\nu}
=
\bigcup_{t\in[0,d_E]}
D_{R_{E^{\#\nu}}(t)}^\nu(a-t\nu)
\end{equation*} 
for some $d_E\in[0,+\infty)$ and some concave function $R_{E^{\#\nu}}\colon[0,d_E]\to[0,+\infty)$ such that $R_{E^{\#\nu}}(0)=r$ as in~\eqref{eq:disco}.
Moreover, recalling the definition in~\eqref{eq:schwartz},
\begin{equation*}
|E^{\#\nu}|
=
|E|
=
|B\cap H|.
\end{equation*} 
Finally, by construction,  $E^{\#\nu}\subset \overline{C^{\#\nu}_\infty\setminus C^{\#\nu}}$, which equivalently rewrites as
\begin{equation*}
R_{E^{\#\nu}}(t)\le \frac rh \,t+r
\quad
\text{for}\ t\in[0,d_{E^{\#\nu}}].
\end{equation*}

\vspace{1ex} 

\textit{Step~4: construction of a family of symmetric sets}.
We now set $w=|E|$ for brevity.
We let $\mathcal F$ be the family of convex bodies $F\subset\R^n$ such that $|F|=w$ and 
\begin{equation}
\label{eq:unione_dischi}
F
=
\bigcup_{t\in[0,d_F]}
D_{R_F(t)}^\nu(a-t\nu)
\end{equation}
for some $d_F\in[0,+\infty)$ and some concave function $R_F\colon[0,d_F]\to[0,+\infty)$ with $R_F(0)=r$ as in~\eqref{eq:disco} and
\begin{equation}
\label{eq:bound_profile}
R_F(t)\le \frac rh \,t+r
\quad
\text{for}\ t\in[0,d_F].
\end{equation}
Note that $F^{\#\nu}=F$ and $F\subset \overline{C^{\#\nu}_\infty\setminus C^{\#\nu}}$ for any $F\in\mathcal F$.
We now claim that 
\begin{equation}
\label{eq:claim_max_dist}
\sup_{F\in\mathcal F} d_F
\le
\frac{nwr^{1-n}}{\omega_{n-1}}.
\end{equation} 
Indeed, given any $F\in\mathcal F$, we have  $q_F=a-d_F\nu\in F$ by~\eqref{eq:unione_dischi}. 
Consequently, since $F\in\mathcal F$ is a convex body, the (bounded and closed) convex cone $\mathcal C(q_F,D_r^\nu(a))$ is contained in $F$, so that 
$|\mathcal C(q_F,D_r^\nu(a))|\le|F|$, which means that 
\begin{equation}
\label{eq:stima_d}
\frac{\omega_{n-1}}{n}\,r^{n-1}d_F\le w,
\end{equation} 
proving the claimed~\eqref{eq:claim_max_dist}.
Combining~\eqref{eq:unione_dischi}, \eqref{eq:bound_profile} and~\eqref{eq:claim_max_dist}, we get that 
\begin{equation}
\label{eq:scatolone}
F\subset 
\overline{\mathcal C(b,D^\nu_{R_w}(q_w))\setminus C^{\#\nu}}
\end{equation}
where $d_w=\frac{nwr^{1-n}}{\omega_{n-1}}$, $q_w=a-d_w\nu$ and $R_w=\frac rh\,d_w+r$, see \cref{fig:F}.
We conclude by claiming that the family~$\mathcal F$, endowed with the Hausdorff distance $h(\cdot,\cdot)$, is a compact metric space.
Indeed, if $(F_j)_{j\in\N}\subset\mathcal F$ is such that $h(F_j,\bar F)\to0^+$ as $j\to+\infty$ for some $\bar F\subset\R^n$, then $\bar F$ is a convex body by Blaschke's Selection Theorem (see~\cite{S14}*{Th.~1.8.7} for instance).
Since $\sup_{j\in\N}P(F_j)<+\infty$ by~\eqref{eq:scatolone}, up to a subsequence, we also get that $\chi_{F_j}\to\chi_{\bar F}$ in $L^1(\R^n)$ and a.e.\ in~$\R^n$ as $j\to+\infty$ (see~\cite{M12}*{Th.~12.26} for example).
As a consequence, the limit convex body $\bar F$ still satisfies~\eqref{eq:scatolone} and $|\bar F|=w$.
Moreover, thanks to~\eqref{eq:stima_d}, up to a subsequence, we may assume that $d_{F_j}\to \bar d$ monotonically as $j\to+\infty$, for some $\bar d\in[0,d_w]$.
Therefore, recalling~\eqref{eq:nu-slice} and since 
\begin{equation*}
|F_j\bigtriangleup \bar F|
=
\int_\R
\Haus^{n-1}\big((F_j\bigtriangleup \bar F)^\nu_t\big)
\di t
=
\int_\R
\Haus^{n-1}\big((F_j)^\nu_t\bigtriangleup (\bar F)^\nu_t\big)
\di t
\end{equation*}  
for all $j\in\N$, again up to a subsequence, we get that $\Haus^{n-1}\big((F_j)^\nu_t\bigtriangleup (\bar F)^\nu_t\big)\to0^+$ as $j\to+\infty$ for a.e.\ $t\in\R$, proving that $\bar F$ satisfies~\eqref{eq:unione_dischi} for $d_{\bar F}=\bar d$ and a convex function $R_{\bar F}\colon[0,d_{\bar F}]\to [0,+\infty)$ as in~\eqref{eq:bound_profile}.
We hence get that $\bar F\in\mathcal F$, yielding the claim.

\begin{figure}
\centering
\begin{tikzpicture}[scale=.5]
\fill [teal, opacity=0.3] (6,-2.5)--(6,2.5)--(4.5,3.12) to [out=170,in=-175,distance=3.75cm] (4.5,-3.12)--(6,-2.5);
\draw [dashed] (0,-5)--(0,5)--(12,0)--(0,-5);
\draw [->,dashed] (-1,0)--(13,0);
\draw (6,-2.5)--(6,2.5)--(4.5,3.12) to [out=170,in=-175,distance=3.75cm] (4.5,-3.12)--(6,-2.5);
\draw [dashed] (3.5,-3)--(3.5,3.15);
\draw (3.5,0) node [below] {$a-t\nu$};
\draw (3.5,0) node {\tiny\textbullet};
\draw [decorate,decoration={brace,amplitude=5pt,mirror},xshift=2pt,yshift=0pt] (3.5,0) -- (3.5,3.1) node [black,midway,xshift=20pt] {$R_F(t)$};
\draw (0,0) node [below left] {$q_w$};
\draw (0,0) node {\tiny\textbullet};
\draw (6,0) node [below right] {$a$};
\draw (6,0) node {\tiny\textbullet};
\draw (12,0) node [below] {$b$};
\draw (12,0) node {\tiny\textbullet};
\draw (12.25,0) node [above right] {$\nu$};
\draw (1.8,0) node [below left] {$q_F$};
\draw (1.7,0) node {\tiny\textbullet};
\draw (5,-1.75) node {$F$};
\draw (1,6) node {$\mathcal C(b,D^\nu_{R_w}(q_w))$};
\draw (7.5,3) node {$C^{\#\nu}$};
\draw [decorate,decoration={brace,amplitude=5pt},xshift=0pt,yshift=2pt] (6.1,0) -- (11.9,0) node [black,midway,yshift=11pt] {$h$};
\draw [decorate,decoration={brace,amplitude=5pt},xshift=0pt,yshift=2pt] (0.1,0) -- (5.9,0) node [black,midway,yshift=12pt] {$d_w$};
\draw [decorate,decoration={brace,amplitude=5pt},xshift=-2pt,yshift=0pt] (0,0) -- (0,5) node [black,midway,xshift=-15pt] {$R_w$};
\begin{scope}[shift={(16.5,0)}]
\fill [teal, opacity=0.3] (6,2.5) arc(90:-90:20pt and 70pt);
\fill [teal, opacity=0.3] (6,-2.5)--(6,2.5)--(4.5,3.12) to [out=170,in=-175,distance=3.75cm] (4.5,-3.12)--(6,-2.5);
\draw (6,0) ellipse [x radius=20pt,y radius=70pt];
\draw [dashed] (4.1,3.1) arc(90:-90:20pt and 88pt);
\draw (4.1,3.1) arc(90:270:20pt and 88pt);
\draw [->,dashed] (-1,0)--(13,0);
\draw [dashed] (0,5)--(12,0)--(0,-5);
\draw [dashed] (6,0)--(6,2.5);
\draw [dashed] (4.1,0)--(4.1,3);
\draw [dashed] (0,0) ellipse [x radius=20pt,y radius=141pt];
\draw (12.25,0) node [above right] {$\nu$};
\draw (6,2.5)--(4.5,3.12) to [out=170,in=-175,distance=3.75cm] (4.5,-3.12)--(6,-2.5);
\draw (6,0) node [below] {$a$};
\draw (6,0) node {\tiny\textbullet};
\draw (12,0) node [below] {$b$};
\draw (12,0) node {\tiny\textbullet};
\draw (5.85,1) node [right] {$r$};
\draw (1.6,1) node [right] {$R_F(t)$};
\draw (1,6) node {$\mathcal C(b,D^\nu_{R_w}(q_w))$};
\draw (7.5,3) node {$C^{\#\nu}$};
\draw (6.5,-4.1) node {$D^\nu_{R_F(t)}(a-t\nu)$};
\draw (7,-3.1) node {$D^\nu_r(a)$};
\draw (4,0) node [below] {$a-t\nu$};
\draw (4,0) node {\tiny\textbullet};
\draw (5,-1.75) node {$F$};
\end{scope}
\end{tikzpicture}
\caption{An element of the family $\mathcal F$ constructed in the proof of \cref{res:quantitative}: its main lengths (left) and its circular sections orthogonal to $\nu\in\mathbb S^{n-1}$ (right).}
\label{fig:F}
\end{figure}
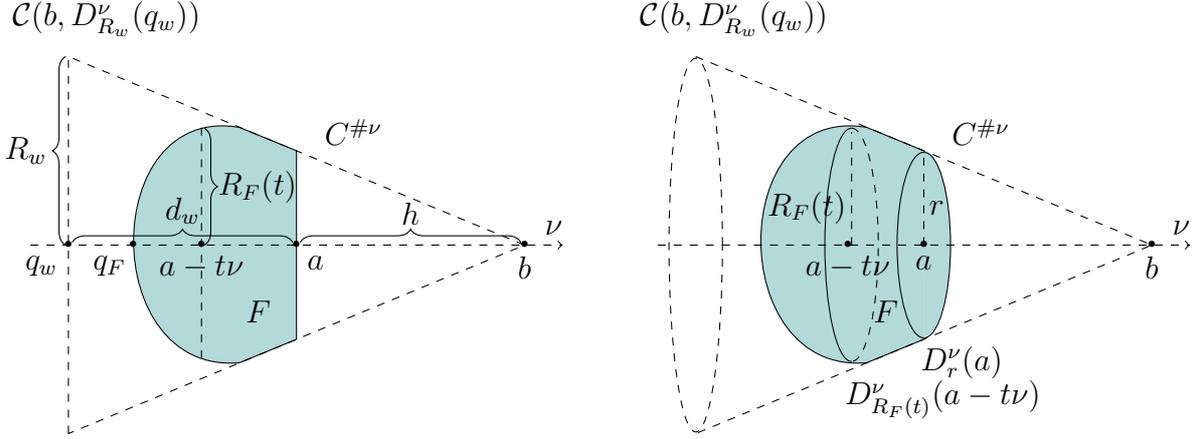

\vspace{1ex}

\textit{Step~5: definition of $f_{K,\nu}$}.
By~\eqref{eq:deficit_symm} and since $E^{\#\nu}\in\mathcal F$ by Step~3, we get that
\begin{equation}
\label{eq:scendo_al_max}
\delta_K(B,A)
\ge
P_K(C^{\#\nu})-
2L_K(E^{\#\nu},C^{\#\nu})
\ge
P_K(C^{\#\nu})-2\sup_{F\in\mathcal F}
L_K(F,C^{\#\nu}).
\end{equation}
Now consider the maximization problem
\begin{equation}
\label{eq:max_problem}
m=\sup_{F\in\mathcal F}
L_K(F,C^{\#\nu}).
\end{equation}
Since $F\subset (C^{\#\nu})^c$ for any $F\in\mathcal F$ by Step~4, we can trivially estimate
\begin{equation*}
m\le L_K((C^{\#\nu})^c,C^{\#\nu})
=
P_K(C^{\#\nu})<+\infty.
\end{equation*}
Hence let $(F_j)_{j\in\N}\subset\mathcal F$ be any sequence such that $L_K(F_j,C^{\#\nu})\to m$ as $j\to+\infty$.
By the compactness of the metric space $(\mathcal F,h)$ proved in Step~4, up to a subsequence, we find some $M\in\mathcal F$ such that $h(F_j,M)\to0^+$ as $j\to+\infty$.
Since also $\sup_{j\in\N}P(F_j)<+\infty$ by~\eqref{eq:scatolone}, up to a subsequence, we also get that $\chi_{F_j}\to\chi_M$ in $L^1(\R^n)$ and a.e.\ in~$\R^n$ as $j\to+\infty$.
By Fatou's Lemma, we hence infer that 
\begin{equation}
\label{eq:realizzo_m}
m\le 
L_K(M,C^{\#\nu})
\le
\liminf_{j\to+\infty}
L_K(F_j,C^{\#\nu})
=
m,
\end{equation}
yielding that $M\in\mathcal F$ is a maximizer of~\eqref{eq:max_problem}.
Now we observe that 
\begin{equation*}
P_K(C^{\#\nu})-2L_K(M,C^{\#\nu})\ge0.
\end{equation*}
Indeed, arguing exactly as in~\eqref{eq:insiemistica}, we check that
\begin{equation}
\label{eq:non-neg}
P_K(C^{\#\nu})-2L_K(M,C^{\#\nu})
=
P_K(M\cup C^{\#\nu})-P(M),
\end{equation}
which is non-negative by \cref{res:monotonicity}, since $M\cup C^{\#\nu}$ is a convex body by construction of the family $\mathcal F$.
Now, again by the definition of $\mathcal F$ in Step~4, the value $m=m(h,r,w)$ of the maximization problem~\eqref{eq:max_problem} is uniquely determined in terms of $h$, $r$ and $w$.
In addition, thanks to Step~3, $P_K(C^{\#\nu})$ is uniquely determined by~$h$ and~$r$. 
We hence define
\begin{equation}
\label{eq:definisco_f}
f_{K,\nu}(h,\omega_{n-1}r^{n-1},w)
=
P_K(C^{\#\nu})-2\,m(h,r,w)
=
P_K(C^{\#\nu})-2L_K(M,C^{\#\nu}),
\end{equation}
yielding~\eqref{eq:quantitative} thanks to~\eqref{eq:scendo_al_max}, \eqref{eq:max_problem} and~\eqref{eq:realizzo_m}.
Finally, in virtue of the above construction and of~\eqref{eq:non-neg} and~\eqref{eq:definisco_f}, 
the equality in~\eqref{eq:quantitative} is attained by the sets $A=M$ and $B=M\cup C^{\#\nu}$, where $M$ is any solution of the maximization problem~\eqref{eq:max_problem}.
\end{proof}

\begin{remark}[On the maximization problem~\eqref{eq:max_problem}]
With the same notation of the proof of \cref{res:quantitative}, we can rewrite 
\begin{equation}
\label{eq:potential}
L_K(F,C^{\#\nu})
=
\int_{F}g^K_{C^{\#\nu}}(x)\di x,
\quad
F\in\mathcal F,
\end{equation}
where $g^K_{C^{\#\nu}}\colon\R^n\to[0,+\infty]$ is  given by
\begin{equation*}
g^K_{C^{\#\nu}}(x)
=
(K*\chi_{C^{\#\nu}})(x)
=
\int_{\R^n}K(x-y)\,\chi_{C^{\#\nu}}(y)\di y
\quad
\text{for}\ x\in\R^n.
\end{equation*}
Hence problem~\eqref{eq:max_problem} can be equivalently interpreted as the maximization of the \emph{$g^K_{C^{\#\nu}}$-potential energy}~\eqref{eq:potential} among convex bodies  $F\in\mathcal M$.
\end{remark}

\subsection{Proof of \texorpdfstring{\cref{res:phi-quantitative,res:fractional}}{the radially-symmetric decreasing and fractional cases}}

The following result is a simple interpolation estimate:
the first part of the statement follows from~\cite{BP19}*{Prop.~2.2}, while the second part is an easy refinement.
We leave the plain details to the reader.   

\begin{lemma}[Interpolation]
\label{res:interpolation}
If $E\subset\R^n$ is a convex body, then
\begin{equation}
\label{eq:phi-interpolation}
P_K(E)
\le 
\max\set*{\tfrac12\,P(E),|E|}\,\int_{\R^n}\min\set*{1,|x|}\,K(x)\di x.
\end{equation}
In particular, in the fractional case $s\in(0,1)$, 
\begin{equation}
\label{eq:s-interpolation}
P_s(E)
\le 
\frac{2^{1-s}\,n\,\omega_n}{s\,(1-s)}
\,P(E)^s\,|E|^{1-s}.
\end{equation}
\end{lemma}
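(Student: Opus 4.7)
The plan is to split the double integral defining $P_K(E)$ according to the size of the separation $|x-y|$. Changing variables to $h = y - x$ and exploiting the symmetry~\eqref{ass:sym} of $K$, one may rewrite
\begin{equation*}
P_K(E) = \frac12 \int_{\R^n} K(h) \int_{\R^n} |\chi_E(x+h) - \chi_E(x)| \di x \di h.
\end{equation*}
I would then combine two elementary bounds for the inner integral. On the one hand, since $E$ is a convex body we have $\chi_E \in BV(\R^n)$ with total variation $|D\chi_E|(\R^n) = P(E)$, and the classical translation estimate for $BV$ functions gives $\int_{\R^n} |\chi_E(x+h) - \chi_E(x)|\di x \le |h|\,P(E)$. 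On the other hand, the pointwise bound $|\chi_E(x+h) - \chi_E(x)| \le \chi_E(x+h) + \chi_E(x)$, once integrated, yields $\int_{\R^n} |\chi_E(x+h) - \chi_E(x)|\di x \le 2|E|$.

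To obtain~\eqref{eq:phi-interpolation}, I would fix the threshold $R = 1$, apply the first bound on the near-diagonal region $\set*{|h|\le 1}$ (where $\min\set*{1,|h|} = |h|$) and the second one on $\set*{|h|>1}$ (where $\min\set*{1,|h|} = 1$). Summing gives
\begin{equation*}
P_K(E) \le \frac{P(E)}{2} \int_{\set*{|h|\le 1}} |h|\, K(h) \di h + |E| \int_{\set*{|h|>1}} K(h) \di h \le \max\set*{\tfrac12 P(E),\,|E|} \int_{\R^n} \min\set*{1,|h|}\, K(h) \di h,
\end{equation*}
which is the claimed~\eqref{eq:phi-interpolation}.

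For~\eqref{eq:s-interpolation}, I would instead keep the threshold $R > 0$ free and evaluate, in spherical coordinates, $\int_{\set*{|h|\le R}} |h|^{1-n-s} \di h = \tfrac{n\omega_n}{1-s}\, R^{1-s}$ and $\int_{\set*{|h|>R}} |h|^{-n-s} \di h = \tfrac{n\omega_n}{s}\, R^{-s}$. The same splitting then yields
\begin{equation*}
P_s(E) \le \frac{n\omega_n\, P(E)\, R^{1-s}}{2(1-s)} + \frac{n\omega_n\, |E|\, R^{-s}}{s},
\end{equation*}
and I would conclude by choosing $R$ proportional to $|E|/P(E)$ (namely, the value essentially minimizing the right-hand side) and combining the two resulting terms via $\tfrac{1}{s} + \tfrac{1}{1-s} = \tfrac{1}{s(1-s)}$ to land on the coefficient appearing in~\eqref{eq:s-interpolation}.

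Since the whole argument reduces to the classical BV translation estimate together with a one-variable optimization over the threshold~$R$, I do not anticipate any serious obstacle; the only point requiring care is the bookkeeping of constants through the dyadic splitting and the tuning of~$R$.
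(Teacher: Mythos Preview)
Your argument is correct and is precisely the standard route: rewrite $P_K(E)$ via the change of variables $h=y-x$, bound the inner integral by $\min\{|h|\,P(E),\,2|E|\}$ using the $BV$ translation estimate and the trivial $L^1$ bound, and split at a threshold~$R$. The paper does not spell out a proof at all---it simply cites \cite{BP19}*{Prop.~2.2} for~\eqref{eq:phi-interpolation} and declares~\eqref{eq:s-interpolation} an ``easy refinement'' left to the reader---so your write-up is exactly the kind of direct verification the authors had in mind.

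One small bookkeeping remark: the genuinely optimal threshold in the fractional case is $R=2|E|/P(E)$, and plugging this in yields
\[
P_s(E)\le \frac{2^{-s}\,n\,\omega_n}{s(1-s)}\,P(E)^s\,|E|^{1-s},
\]
which is a factor of~$2$ \emph{better} than the constant $2^{1-s}$ stated in~\eqref{eq:s-interpolation}. So your optimization does not ``land on'' the stated coefficient but rather undershoots it; the claimed inequality of course follows \emph{a fortiori}. No obstacle, just be aware that the constants will not match on the nose.
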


\begin{proof}[Proof of \cref{res:phi-quantitative}]
Since $P_\phi$ is invariant by rotations, we can apply \cref{res:quantitative} for $\nu=\mathrm e_n$, so that $C^{\#\mathrm e_n}=\mathcal C_r^h$ by Step~2 of the proof of \cref{res:quantitative}. 
By Steps~1 and~3 of the proof of \cref{res:quantitative}, and thanks to~\eqref{eq:phi-interpolation} in \cref{res:interpolation}, we can estimate
\begin{equation*}
\begin{split}
L_K(E^{\#\mathrm e_n},\mathcal C_r^h)
&\le 
L_K(E^{\#\mathrm e_n},(E^{\#\mathrm e_n})^c)
=
P_K(E^{\#\mathrm e_n})
\\
&\le 
\max\set*{\tfrac12\,P(E^{\#\mathrm e_n}),|E^{\#\mathrm e_n}|}\,\int_{\R^n}\min\set*{1,|x|}\,\phi(|x|)\di x.
\end{split}
\end{equation*}
The conclusion thus follows by observing that $|E^{\#\mathrm e_n}|=|B\cap H|$ thanks to the definition in~\eqref{eq:schwartz} and also that $P(E^{\#\mathrm e_n})\le P(B\cap H)$ by the Schwartz inequality for the Euclidean perimeter (see~\cite{M12}*{Th.~19.11}).
\end{proof}

\begin{proof}[Proof of \cref{res:fractional}]
We argue as in the proof of \cref{res:phi-quantitative}.
We just notice that
\begin{equation*}
P_s(C^{\#\mathrm e_n})
= 
P_s(\mathcal C_h^r)
\ge 
c_{n,s}^{\rm iso}\,|\mathcal C_r^h|^{\frac{n-s}n}
= 
c^{\rm iso}_{n,s}
\left(\tfrac{\omega_{n-1}}{n}\,h\,r^{n-1}
\right)^{1-\frac sn}
\end{equation*}
thanks to~\eqref{eq:s-iso} and that
\begin{equation*}
L_s(E^{\#\mathrm e_n},\mathcal C_r^h)
\le
P_s(E^{\#\mathrm e_n})
\le 
\frac{2^{1-s}\,n\,\omega_n}{s\,(1-s)}
\,P(E^{\#\mathrm e_n})^s\,|E^{\#\mathrm e_n}|^{1-s}
\le
\frac{2^{1-s}\,n\,\omega_n}{s\,(1-s)}
\,P(E)^s\,|E|^{1-s}
\end{equation*}
by~\eqref{eq:s-interpolation} in \cref{res:interpolation}, readily yielding the conclusion.
\end{proof}

\subsection{Proof of \texorpdfstring{\cref{res:segmenti}}{1D monotonicity}}

We conclude by dealing with the case $n=1$.

\begin{proof}[Proof of \cref{res:segmenti}]
Since $P_\phi$ is invariant by translation, we can assume that $A=(0,|A|)$ and $B=(0,|B|)$ in~$\R$. 
We now observe that, by a simple change of variables, 
\begin{equation*}
\begin{split}
P_\phi(A)
=
\int_{(0,|A|)}\int_{(0,|A|)^c}
\phi(|x-y|)\di x\di y
=
|A|^2
\int_{(0,1)}\int_{(0,1)^c}
\phi(|A|\,|\xi-\eta|)\di\xi\di\eta.
\end{split}
\end{equation*}
Therefore, by~\eqref{eq:2-decreasing}, we can estimate
\begin{equation*}
\begin{split}
P_\phi(B)-P_\phi(A)
&=
\int_{(0,1)}\int_{(0,1)^c}
\left(|B|^2\phi(|B|\,|\xi-\eta|)-|A|^2\phi(|A|\,|\xi-\eta|)\right)
\di\xi\di\eta
\\
&\ge
\int_{(0,1)}\int_{(0,1)^c}
\big(\psi(|B|)-\psi(|A|)\big)\phi(|\xi-\eta|)
\di\xi\di\eta
\\
&=\big(\psi(|B|)-\psi(|A|)\big)\,P_\phi((0,1)),
\end{split}
\end{equation*}
yielding the conclusion.
\end{proof}


\begin{bibdiv}
\begin{biblist}

\bib{A04}{collection}{
   author={Archimedes},
   title={The works of Archimedes. Vol. I},
   edition={critical edition},
   note={The two books on the sphere and the cylinder},
   publisher={Cambridge University Press, Cambridge},
   date={2004},
   pages={x+375},
   review={\MR{2093668}},
}

\bib{ADM11}{article}{
   author={Ambrosio, Luigi},
   author={De Philippis, Guido},
   author={Martinazzi, Luca},
   title={Gamma-convergence of nonlocal perimeter functionals},
   journal={Manuscripta Math.},
   volume={134},
   date={2011},
   number={3-4},
   pages={377--403},
   review={\MR{2765717}},
}

\bib{BP19}{article}{
   author={Berendsen, Judith},
   author={Pagliari, Valerio},
   title={On the asymptotic behaviour of nonlocal perimeters},
   journal={ESAIM Control Optim. Calc. Var.},
   volume={25},
   date={2019},
   pages={Paper No. 48, 27},
   review={\MR{4011022}},
}

\bib{BS22}{article}{
   author={Bessas, Konstantinos},
   author={Stefani, Giorgio},
   title={Non-local $BV$ functions and a denoising model with $L^1$ fidelity},
   date={2022},
   note={Preprint, available at \href{https://arxiv.org/abs/2210.11958}{arXiv:2210.11958}},
}

\bib{B21}{article}{
   author={Berman, Robert J.},
   title={Convergence rates for discretized Monge--Amp\`ere equations and
   quantitative stability of optimal transport},
   journal={Found. Comput. Math.},
   volume={21},
   date={2021},
   number={4},
   pages={1099--1140},
   review={\MR{4298241}},
}

\bib{BF87}{book}{
   author={Bonnesen, T.},
   author={Fenchel, W.},
   title={Theory of convex bodies},
   editor={Boron, L.},
   editor={Christenson, C.},
   editor={Smith, B.},
   note={Translated from the German and edited by L. Boron, C. Christenson
   and B. Smith},
   publisher={BCS Associates, Moscow, ID},
   date={1987},
   pages={x+172},
   review={\MR{0920366}},
}
\bib{BLL74}{article}{
   author={Brascamp, H. J.},
   author={Lieb, Elliott H.},
   author={Luttinger, J. M.},
   title={A general rearrangement inequality for multiple integrals.},
   journal={J. Functional Analysis},
   date={1974},
   pages={227--237},
   review={\MR{0346109}},
}

\bib{BFK95}{article}{
   author={Buttazzo, Giuseppe},
   author={Ferone, Vincenzo},
   author={Kawohl, Bernhard},
   title={Minimum problems over sets of concave functions and related
   questions},
   journal={Math. Nachr.},
   volume={173},
   date={1995},
   pages={71--89},
   review={\MR{1336954}},
}

\bib{C20}{article}{
   author={Cabr\'{e}, Xavier},
   title={Calibrations and null-Lagrangians for nonlocal perimeters and an
   application to the viscosity theory},
   journal={Ann. Mat. Pura Appl. (4)},
   volume={199},
   date={2020},
   number={5},
   pages={1979--1995},
   review={\MR{4142859}},
}

\bib{CGLP15}{article}{
   author={Carozza, Menita},
   author={Giannetti, Flavia},
   author={Leonetti, Francesco},
   author={Passarelli di Napoli, Antonia},
   title={A sharp quantitative estimate for the perimeters of convex sets in
   the plane},
   journal={J. Convex Anal.},
   volume={22},
   date={2015},
   number={3},
   pages={853--858},
   review={\MR{3400158}},
}

\bib{CGLP16}{article}{
   author={Carozza, Menita},
   author={Giannetti, Flavia},
   author={Leonetti, Francesco},
   author={Passarelli di Napoli, Antonia},
   title={A sharp quantitative estimate for the surface areas of convex sets
   in $\mathbb{R}^3$},
   journal={Atti Accad. Naz. Lincei Rend. Lincei Mat. Appl.},
   volume={27},
   date={2016},
   number={3},
   pages={327--333},
   review={\MR{3510903}},
}
\bib{CGLP19}{article}{
   author={Carozza, Menita},
   author={Giannetti, Flavia},
   author={Leonetti, Francesco},
   author={Passarelli di Napoli, Antonia},
   title={Convex components},
   journal={Commun. Contemp. Math.},
   volume={21},
   date={2019},
   number={6},
   pages={1850036, 10},
   review={\MR{3996972}},
}

\bib{CN18}{article}{
	author={Cesaroni, Annalisa},
	author={Novaga, Matteo},
	title={The isoperimetric problem for nonlocal perimeters},
	journal={Discrete Contin. Dyn. Syst. Ser. S},
	volume={11},
	date={2018},
	number={3},
	pages={425--440},
	review={\MR{3732175}},
}

\bib{CN22}{article}{
   author={Cesaroni, Annalisa},
   author={Novaga, Matteo},
   title={$K$-mean convex and $K$-outward minimizing sets},
   journal={Interfaces Free Bound.},
   volume={24},
   date={2022},
   number={1},
   pages={35--61},
   review={\MR{4395703}},
}

\bib{CMP15}{article}{
   author={Chambolle, Antonin},
   author={Morini, Massimiliano},
   author={Ponsiglione, Marcello},
   title={Nonlocal curvature flows},
   journal={Arch. Ration. Mech. Anal.},
   volume={218},
   date={2015},
   number={3},
   pages={1263--1329},
   review={\MR{3401008}},
}

\bib{CSV19}{article}{
   author={Cinti, Eleonora},
   author={Serra, Joaquim},
   author={Valdinoci, Enrico},
   title={Quantitative flatness results and $BV$-estimates for stable
   nonlocal minimal surfaces},
   journal={J. Differential Geom.},
   volume={112},
   date={2019},
   number={3},
   pages={447--504},
   review={\MR{3981295}},
}

\bib{DNP21}{article}{
   author={De Luca, Lucia},
   author={Novaga, Matteo},
   author={Ponsiglione, Marcello},
   title={The 0-fractional perimeter between fractional perimeters and Riesz
   potentials},
   journal={Ann. Sc. Norm. Super. Pisa Cl. Sci. (5)},
   volume={22},
   date={2021},
   number={4},
   pages={1559--1596},
   review={\MR{4360596}},
}

\bib{DV20}{article}{
   author={Dekeyser, Justin},
   author={Van Schaftingen, Jean},
   title={Range convergence monotonicity for vector measures and range
   monotonicity of the mass},
   journal={Ric. Mat.},
   volume={69},
   date={2020},
   number={1},
   pages={293--326},
   review={\MR{4098186}},
}

\bib{DPV12}{article}{
   author={Di Nezza, Eleonora},
   author={Palatucci, Giampiero},
   author={Valdinoci, Enrico},
   title={Hitchhiker's guide to the fractional Sobolev spaces},
   journal={Bull. Sci. Math.},
   volume={136},
   date={2012},
   number={5},
   pages={521--573},
   review={\MR{2944369}},
}

\bib{DLW22}{article}{
   author={Du, Qiang},
   author={Lu, Xin Yang},
   author={Wang, Chong},
   title={The average-distance problem with an Euler elastica penalization},
   journal={Interfaces Free Bound.},
   volume={24},
   date={2022},
   number={1},
   pages={137--162},
   review={\MR{4395706}},
}

\bib{FFMMM15}{article}{
	author={Figalli, A.},
	author={Fusco, N.},
	author={Maggi, F.},
	author={Millot, V.},
	author={Morini, M.},
	title={Isoperimetry and stability properties of balls with respect to
		nonlocal energies},
	journal={Comm. Math. Phys.},
	volume={336},
	date={2015},
	number={1},
	pages={441--507},
	review={\MR{3322379}},
}

\bib{FPSS22}{article}{
  author = {Franceschi, Valentina},
  author = {Pinamonti, Andrea},
  author = {Saracco, Giorgio},
  author = {Stefani, Giorgio },
  title = {The Cheeger problem in abstract measure spaces},
  year = {2022},
  note={Preprint, available at \href{https://arxiv.org/abs/2207.00482}{arXiv:2207.00482}},
}

\bib{G17}{article}{
   author={Giannetti, Flavia},
   title={Sharp geometric quantitative estimates},
   journal={Atti Accad. Naz. Lincei Rend. Lincei Mat. Appl.},
   volume={28},
   date={2017},
   number={1},
   pages={1--6},
   review={\MR{3621767}},
}

\bib{GS23}{article}{
   author={Giannetti, Flavia},
   author={Stefani, Giorgio},
   title={On the convex components of a set in $\mathbb{R}^{n}$},
   journal={Forum Math.},
   volume={35},
   date={2023},
   number={1},
   pages={187--199},
   review={\MR{4529426}},
}


\bib{H23}{article}{
   author={Hynd, Ryan},
   title={A doubly monotone flow for constant width bodies in $\mathbb{R}^3$},
   conference={
      title={Geometric and Functional Inequalities and Recent Topics in
      Nonlinear PDEs},
   },
   book={
      series={Contemp. Math.},
      volume={781},
      publisher={Amer. Math. Soc., Providence, RI},
   },
   date={2023},
   pages={49--101},
   review={\MR{4531939}},
}

\bib{K21}{article}{
   author={Kreuml, Andreas},
   title={The anisotropic fractional isoperimetric problem with respect to unconditional unit balls},
   journal={Commun. Pure Appl. Anal.},
   volume={20},
   date={2021},
   number={2},
   pages={783--799},
   review={\MR{4214043}},
}

\bib{LL08}{article}{
   author={La Civita, Marianna},
   author={Leonetti, Francesco},
   title={Convex components of a set and the measure of its boundary},
   journal={Atti Semin. Mat. Fis. Univ. Modena Reggio Emilia},
   volume={56},
   date={2008/09},
   pages={71--78},
   review={\MR{2604730}},
}
\bib{LL01}{book}{
   author={Lieb, Elliott H.},
   author={Loss, Michael},
   title={Analysis},
   series={Graduate Studies in Mathematics},
   volume={14},
   edition={2},
   publisher={American Mathematical Society, Providence, RI},
   date={2001},
   pages={xxii+346},
   review={\MR{1817225}},
}

\bib{L14}{article}{
   author={Ludwig, Monika},
   title={Anisotropic fractional perimeters},
   journal={J. Differential Geom.},
   volume={96},
   date={2014},
   number={1},
   pages={77--93},
   review={\MR{3161386}},
}

\bib{M12}{book}{
   author={Maggi, Francesco},
   title={Sets of finite perimeter and geometric variational problems},
   series={Cambridge Studies in Advanced Mathematics},
   volume={135},
   note={An introduction to geometric measure theory},
   publisher={Cambridge University Press, Cambridge},
   date={2012},
   pages={xx+454},
   review={\MR{2976521}},
}

\bib{M22}{article}{
   author={Melchionna, Andrew},
   title={The sandpile identity element on an ellipse},
   journal={Discrete Contin. Dyn. Syst.},
   volume={42},
   date={2022},
   number={8},
   pages={3709--3732},
   review={\MR{4447555}},
}

\bib{P20}{article}{
	author={Pagliari, Valerio},
	title={Halfspaces minimise nonlocal perimeter: a proof {\it via}
		calibrations},
	journal={Ann. Mat. Pura Appl. (4)},
	volume={199},
	date={2020},
	number={4},
	pages={1685--1696},
	review={\MR{4117514}},
}

\bib{SS23}{article}{
   author={Saracco, Giorgio},
   author={Stefani, Giorgio},
   title={On the monotonicity of weighted perimeters of convex bodies},
   date={2023},
   note={Preprint, available at \href{https://arxiv.org/abs/2306.07770}{arXiv:2306.07770}},
}

\bib{S14}{book}{
   author={Schneider, Rolf},
   title={Convex bodies: the Brunn-Minkowski theory},
   series={Encyclopedia of Mathematics and its Applications},
   volume={151},
   edition={expanded edition},
   publisher={Cambridge University Press, Cambridge},
   date={2014},
   pages={xxii+736},
   review={\MR{3155183}},
}

\bib{S18}{article}{
   author={Stefani, Giorgio},
   title={On the monotonicity of perimeter of convex bodies},
   journal={J. Convex Anal.},
   volume={25},
   date={2018},
   number={1},
   pages={93--102},
   review={\MR{3756927}},
}

\end{biblist}
\end{bibdiv}

\end{document}